 \numberwithin{equation}{section}
\newcommand{\io}{\int_{\Omega}}
\newcommand{\ib}{\int_{B_r(y)}}
\newcommand{\mnp}{(\m\cdot\nabla p) }
\newcommand{\dy}{\textup{dist}(y,\partial\Omega)}
  \newcommand{\m}{\mathbf{m}}
\newcommand{\ot}{\Omega_T }
\newtheorem{theorem}{Theorem}[section]
\newtheorem*{main}{Main Theorem}
\newtheorem{lemma}[theorem]{Lemma}
\theoremstyle{definition}
\newtheorem{definition}[theorem]{Definition}
\newcommand{\ep}{\varepsilon}
\title[a biological network formulation model
] 
      {Global existence of strong solutions to a biological network formulation model in $2+1$ dimensions}
\author[Xiangsheng Xu]{}
\subjclass{Primary: 35B45, 35B65, 35M33, 35Q92.}
 \keywords{ biological network formulation, cubic nonlinearity, De Giorgi iteration method.
 {\it Discrete \& Continuous Dynamical Systems - A}, to appear.
}
 \email{xxu@math.msstate.edu}
\begin{document}
\maketitle

\centerline{\scshape Xiangsheng Xu}
\medskip
{\footnotesize
 \centerline{Department of Mathematics \& Statistics}
   \centerline{Mississippi State University}
   \centerline{ Mississippi State, MS 39762, USA}
} 

\bigskip

\begin{abstract}
In this paper we study the initial boundary value problem for the system\\ $-\mbox{{div}}\left[(I+\mathbf{m} \mathbf{m}^T)\nabla p\right]=s(x),\ \  \mathbf{m}_t-\alpha^2\Delta\mathbf{m}+|\mathbf{m}|^{2(\gamma-1)}\mathbf{m}=\beta^2(\mathbf{m}\cdot\nabla p)\nabla p$ in two space dimensions. This problem has been proposed as a continuum model for biological transportation networks. The mathematical challenge is due to the presence of cubic nonlinearities, also known as trilinear forms, in the system. We obtain a weak solution $(\mathbf{m},p) $ with both $|\nabla p|$ and $|\nabla\mathbf{m}|$ being bounded. The result immediately triggers a bootstrap argument which can yield higher regularity for the weak solution. This is achieved by deriving an equation for $v\equiv(I+\mathbf{m} \mathbf{m}^T)\nabla p\cdot\nabla p$, and then suitably applying the De Giorge iteration method to the equation.
\end{abstract}
\section{Introduction}
Continuum models for biological transportation networks have received tremendous attention recently. We refer the reader to \cite{ABHMS} for a rather comprehensive survey of the subject. The most well known model is the one proposed by Hu and Cai \cite{H,HC}. It describes the pressure field of a network using a Darcy's type equation and the dynamics of the conductance network under pressure force effects. More precisely, 
let $\Omega$ be the network region, a bounded domain in $\mathbb{R}^N$ with  boundary $\partial\Omega$, and $T$ a positive number. Set $\ot=\Omega\times(0,T)$. Then the scalar pressure function $p=p(x,t)$ and the $N$-dimensional conductance vector field $\m=\m(x,t)$ satisfy the system
\begin{align}
-\mbox{{div}}\left[(I+\m \m^T)\nabla p\right]&=s(x)\ \ \ \mbox{in $\ot$,}\label{e1}\\
\m_t-\alpha^2\Delta \m+|\m|^{2(\gamma-1)}\m&=\beta^2\mnp\nabla p\ \ \ \mbox{in $\ot$}\label{e2}
\end{align}
coupled with the initial boundary conditions
\begin{align}
p=|\m|&=0 \ \ \ \mbox{ on $\Sigma_T\equiv\partial\Omega\times(0,T)$} ,\label{e3}\\
\m(x,0)&=\m_0(x)\ \ \ \mbox{on $\Omega$}. \label{e4}
\end{align} 
Here the function $s(x)$ is the time-independent source term. Values of the parameters $\alpha, \beta$, and $\gamma$ are
determined by the particular physical applications one has in mind. For example, in blood vessel systems, we have $\gamma=\frac{1}{2}$, while in leaf venation   $\gamma>\frac{1}{2}$ is likely \cite{HMPS}. 

Many aspects of the model have been investigated. A result in \cite{HMP} asserts that  \eqref{e1}
-\eqref{e4} has a weak solution, provided that, in addition to assuming $s(x)\in L^2(\Omega)$ and $\alpha, \beta>0, \gamma \geq 1$, we also have
\begin{equation*}
\m_0\in\left( W^{1,2}_0(\Omega)\cap L^{2\gamma}(\Omega)\right)^N.
\end{equation*} 
The notion of a weak solution in \cite{HMP} is defined as follows:
\begin{definition}
	A pair $(\m, p)$ is said to be a weak solution if:
	\begin{enumerate}
		\item[(D1)] $\m\in L^\infty\left(0,T; \left(W^{1,2}_0(\Omega)\cap L^{2\gamma}(\Omega)\right)^N\right),\ \partial_t\m\in L^2\left(0,T; \left(L^2(\Omega)\right)^N\right),\  p\in L^\infty(0,T; W^{1,2}_0(\Omega)),\\  \mnp \in L^\infty(0,T;  L^{2}(\Omega))$;
		\item[(D2)] $\m(x,0)=\m_0$ in $C\left([0,T]; \left(L^2(\Omega)\right)^N\right)$;
		\item[(D3)] Equations \eqref{e1} and \eqref{e2} are satisfied in the sense of distributions.
	\end{enumerate}
\end{definition}
The proof in \cite{HMP} was based upon the following two equations:
\begin{eqnarray}
\lefteqn{\frac{1}{2}\int_{\Omega}|\m(x,\tau)|^2dx+D^2\int_{\Omega_\tau}|\nabla \m|^2dxdt+\beta^2\int_{\Omega_\tau} \mnp^2dxdt}\nonumber\\
&&+\int_{\Omega_\tau}|\m|^{2\gamma}dxdt+2\beta^2	\int_{\Omega_\tau} |\nabla p|^2dxd\tau\nonumber\\
&=&\frac{1}{2}\int_{\Omega}|\m_0|^2dx+2\beta^2\int_{\Omega_\tau} s(x)pdxdt,\label{me1}\\
\lefteqn{\int_{\Omega_\tau}|\partial_t\m|^2dxdt+\frac{\alpha^2}{2}\int_{\Omega}|\nabla \m(x,\tau)|^2dx+\frac{\beta^2}{2}\int_{\Omega}\mnp^2 dx}\nonumber\\
&&+\frac{\beta^2}{2}\int_{\Omega}|\nabla p|^2dx+\frac{1}{2\gamma}\int_{\Omega}|\m|^{2\gamma}dx\nonumber\\
&=&\frac{\alpha^2}{2}\int_{\Omega}|\nabla \m_0|^2dx+\frac{\beta^2}{2}\int_{\Omega}(\m_0\cdot \nabla p_0)^2dx+\frac{1}{2\gamma}\int_{\Omega}|\m_0|^{2\gamma}dx\nonumber\\
&&+\frac{\beta^2}{2}\int_{\Omega}|\nabla p_0|^2dx,\label{f5}
\end{eqnarray}
where $\tau\in (0, T], \Omega_\tau=\Omega\times(0,\tau)$, and $p_0$ is the solution of the boundary value problem
\begin{eqnarray}
-\mbox{div}[(I+\m_0 \m_0^T)\nabla p_0] &=& s(x),
\ \ \ \mbox{in $\Omega$,}\label{po1}\\
p_0&=& 0\ \ \ \mbox{on $\partial\Omega$.}\label{po2}
\end{eqnarray}
Finite time extinction or break-down of solutions in the spatially one-dimensional setting for certain ranges of the relaxation exponent $\gamma$ was carefully studied in \cite{HMPS}. Further modeling analysis and numerical results can be found in \cite{AAFM}. We also mention that the question of existence in the case where $\gamma=\frac{1}{2}$ is addressed in \cite{HMPS}. The novelty here is that the term $|\m|^{2(\gamma-1)}\m$ is not continuous at $\m=0$. 
Nonetheless, the general regularity theory remains fundamentally incomplete. In particular, it is not known whether or not weak solutions develop singularities when the space dimension $N$ is bigger than or equal to $2$.
If the space dimension $N$ is three, the initial value problem for the system \eqref{e1}-\eqref{e2} has been studied in \cite{L}, where the local existence of a strong solution and global existence of such a solution for small data are established. In addition, the author obtained a condition  which a strong solution must satisfy if it blew up in finite time. However, the author specifically mentioned that his method there was not applicable to the case where $N=1$ or $2$. For $N=2$ the same initial value problem was considered in \cite{SL}. Here the authors obtained a similar blow-up criterion to that in \cite{L} and the global existence of a strong solution under the additional assumptions that $\alpha$ is sufficiently large and $\gamma\geq 1$. As for the initial-boundary value problem for \eqref{e1} and \eqref{e2},  Jian-Guo Liu and the author \cite{LX} obtained a partial regularity theorem for \eqref{e1}-\eqref{e4}. It states that the parabolic Hausdorff dimension of the set of singular points can not exceed $N$, provided that $N\leq 3$. A different form of partial regularity is obtained in \cite{X6}. If $N=2$, then it is shown in \cite{X5} that $p$ is continuous in the space variables and $(\m, p)$ becomes a classical solution under the additional assumption that it is time-independent.

We study the regularity properties of weak solutions to problem \eqref{e1}-\eqref{e4}
for $N=2$ under the assumptions that the given functions $s(x), \m_0(x)$ and physical parameters $\alpha, \beta, \gamma$ have properties:
\begin{enumerate}
	\item[(H1)] $s(x)\in L^{\infty}(\Omega)\cap W^{1,q}(\Omega)$ for some $q>1$; 
	\item[(H2)] $\alpha, \beta\in (0, \infty), \gamma\in (\frac{1}{2}, \infty)$; and
	\item[(H3)] $\m_0(x)\in \left(W^{1,\infty}_0(\Omega)\right)^2$.
\end{enumerate}

We are ready to state our main result. 
\begin{main} Let $\Omega$ be a bounded domain in $\mathbb{R}^2$ with $C^{3+\varepsilon}$ boundary $\partial\Omega$ for some $\varepsilon\in (0,1)$. Assume that (H1)-(H3) hold. Then for each $T>0$ there exists a weak solution $(\m, p)$ to \eqref{e1}-\eqref{e3}
	with
	\begin{equation}\label{mcon}
	|\nabla p|, |\nabla\m|\in L^\infty(\ot).
	\end{equation} 
\end{main}

Since $T$ can be any positive number, the above theorem asserts that  blow-up in $|\m|$ \cite{SL} does not occur in finite time, the result in \cite{X5} can be extended to the time-dependent case, and the singular set in \cite{LX} is empty when the space dimension $N$ is $2$. 
It also implies  that in this case we can extend the local solution given by Theorem 1.7 in  \cite{X6} in the time direction as far away as we want. It is not difficult to see from the proof of \eqref{nmb2} below that \eqref{mcon} is a consequence of (D4) in \cite{X6}. In particular, we do not impose any additional restrictions on the size of the given data except those already enumerated in (H1)-(H3). Thus our theorem is different from  the results in \cite{L,SL}. 

Of course, our main theorem can start a bootstrap argument which results in even higher regularity. In fact, our main theorem implies the existence of the so-called strong solution \cite{L,X6} in the sense that the system \eqref{e1}-\eqref{e2} is satisfied a.e. on $\ot$. Indeed,  
it immediately follows from the main theorem that $\beta^2\mnp\nabla p-|\m|^{2(\gamma-1)}\m\in L^\infty
(\ot)$. The classical regularity theory for the heat equation (\cite{LSU}, Chapter IV) asserts that 
\begin{equation}\label{r3}
|\m_t|,\ \  |\Delta \m|^2\in L^s(\ot)\ \ \mbox{for each $s\geq 1$.} 
\end{equation} Thus we can write \eqref{e1} in the form
\begin{equation}\label{e11}
\mbox{tr}\left[(I+\m \m^T)\nabla^2 p\right]+\mbox{div}	(I+\m \m^T)\nabla p=-s(x). 
\end{equation}
This enables us to conclude from the classical regularity theory for elliptic equations that  $p\in L^\infty(0,T;W^{2,s})$ for each $s\geq 1$. 
Moreover, we can differentiate this equation with respect $x_i$ to derive an equation for $p_{x_i}$. This combined with our high regularity assumption on the boundary $\partial\Omega$ ensures that we even have
\begin{equation}\label{r2}
p\in  W^{3,s}(\Omega)\ \ \mbox{for some $s>1$ and a.e. $t\in (0, T)$.} 
\end{equation} 
This fact will be needed later on.  Note that we only assume that $\gamma>\frac{1}{2}$. As a result, we cannot
differentiate the term $|\m|^{2(\gamma-1)}\m$. This prevents us from obtaining any estimates for the third order partial derivatives of $\m$. 
Note that the elliptic coefficients in \eqref{e1} satisfy
$$|\xi|^2\leq \left((I+\m \m^T)\xi\cdot\xi\right)=|\xi|^2+(\m\cdot\xi)^2\leq (1+|\m|^2)|\xi|^2\ \ \ \mbox{for all $\xi\in \mathbb{R}^2$.}$$
That is, \eqref{e1} is only singular. This enables us to show that $p$ is bounded \cite{LX}. In fact, we have $p\in L^\infty(0,T; C_{\mbox{loc}}(\Omega))$ \cite{X5}. Unfortunately, this is not enough to trigger a bootstrap argument. We must have the H\"{o}lder continuity of $p$ in the space variables to obtain the boundedness of $\m$ (see Lemma \ref{phc1} below). Instead of trying to bridge this gap, we directly go after the boundedness of $\nabla p$. This is motivated by a result in \cite{A} where the author considered an elliptic equation of the
form
\begin{equation}\label{r1}
a_{ij}u_{x_ix_j}+b_iu_{x_i}=0,\ \ i,j=1,2.
\end{equation} 
Here we have employed the Einstein summation convention. That is, repeated indices are implicitly summed over. Denote by $A$ the elliptic coefficient matrix in \eqref{r1}, i.e., $(A)_{ij}=a_{ij}$.
An equation for $\ln \left(A\nabla u\cdot\nabla u\right)$ was derived in \cite{A} to study critical points of $u$. In our case
\begin{equation}\label{adf}
A=I+\m \m^T.
\end{equation}
The key observation is that we can extend the argument in \cite{A} and derive an equation for $v\equiv\left(A\nabla p\cdot\nabla p\right)$. To be specific, let us introduce the following quantities
\begin{eqnarray}
\mathbf{G}&=&v^{-1}\left(\begin{array}{c}
A_{x_1}\nabla p\cdot\nabla p\\
A_{x_2}\nabla p\cdot\nabla p
\end{array}\right),\label{gdef}\\
A_1&=&\left(\begin{array}{cc}
a_{11}\nu_1&a_{12}a_{11}p_{x_1}-(a_{11}a_{22}-2a_{12}^2)p_{x_2}\\
a_{11}\nu_2&a_{22}\nu_1
\end{array}\right),\label{a1}\\
A_2&=&\left(\begin{array}{cc}
a_{11}\nu_2&a_{22}\nu_1\\
-(a_{22}a_{11}-2a_{12}^2)p_{x_1}+a_{12}a_{22}p_{x_2}&a_{22}\nu_2
\end{array}\right),\label{a2}\\
A_3
&=&-\left(\begin{array}{cc}
\nu_1^2&\nu_1\nu_2\\
\nu_1\nu_2&\nu_2^2
\end{array}\right),\label{a3}\\
\mathbf{H}&=&\frac{1}{\mbox{det}(A)v}\left(\begin{array}{c}(\nabla p)^TA_1\\(\nabla p)^TA_2\end{array}\right)\nabla\mbox{det}(A)-A\mathbf{G},\label{vhdef}\\
\mathbf{K}&=&A\mathbf{G}+2v^{-1}wA\nabla p,\label{kdef}\\
h&=&\left(2v^{-1}wA\nabla p-\frac{1}{\mbox{det}(A)v}\left(\begin{array}{c}(\nabla p)^TA_1\\(\nabla p)^TA_2\end{array}\right)\nabla\mbox{det}(A)+A\mathbf{G}\right)\cdot \mathbf{G}\nonumber\\
&&+\frac{2w}{\mbox{det}(A)v^2}\left(A_3\nabla p\cdot\nabla\mbox{det}(A)\right),\label{hdef}
\end{eqnarray}
where $\nu_1=a_{11}p_{x_1}+a_{12}p_{x_2}, \nu_2=a_{12}p_{x_1}+a_{22}p_{x_2}$, i.e.,
\begin{equation}\label{vddef}
\left(\begin{array}{c}
\nu_1\\
\nu_2
\end{array}\right)=A\nabla p
\end{equation} 
because our $A$ is symmetric.
Then $ v $ satisfies the equation
\begin{equation}\label{wine11}
\textup{div}\left(\frac{1}{ v }A\nabla v \right)= \frac{1}{ v }\mathbf{H}\cdot\nabla v +h+\textup{div}\mathbf{K}\ \ \mbox{in $\{|\nabla p|>0\}$}.
\end{equation}
The downside is that this equation  is both degenerate and singular. We overcome these singularities by suitably modifying the classical De Giorge iteration method. Here we explore the fact that $\mathbf{H}, h, \mathbf{K}$ are only bounded by $|\m|, |\nabla\m|$ for large $v$.
Even though the derivation of \eqref{wine11} is inspired by a result in \cite{A}, there are some major differences. The most prominent one is that we have not been able to impose the normalization condition $a_{11} a_{22}-a_{12}^2=1$
as  in \cite{A}. 
Doing so would have changed the smallest eigenvalue of the coefficient matrix to $\frac{1}{\sqrt{1+|\m|^2}}$, which is not bounded away from $0$ below because we do not have the a priori knowledge that $\m$ is bounded. The resulting estimate for $ v $ would be useless to us. As we shall see, not being able to normalize the coefficient matrix causes many complications. 

The idea of deriving an equation for the modulus of the gradient of solutions of an elliptic equation can be traced back to \cite{B}. It has proved to be a powerful tool in obtaining upper bounds for the gradient \cite{PS,S}.

The rest of the paper is organized as follows. Section 2 is largely preparatory. Here we collect some relevant known results for later use. 
In Section 3 we first derive \eqref{wine11}. To justify all the calculations in the derivation, we need to assume that $(\m,p)$ is sufficiently regular. Here we use the local existence result in \cite{X6}. That is, there is a weak solution  $(\m,p)$ to \eqref{e1}-\eqref{e4} satisfying the conclusion of our main theorem at least for $T$ sufficiently small. As we discussed earlier, this solution actually possesses much higher regularity, which will be enough for our purpose. 
The actual proof of the main theorem is achieved in two stages. First we show that $\sup_{0\leq t\leq T}\|\nabla p\|_{\infty,\Omega}$ is bounded by  a power of $\sup_{0\leq t\leq T}\|\nabla\m\|_{\infty,\Omega}$. 
This is done via \eqref{wine11} and the De Giorge iteration method. 
Then we prove that $\|\nabla\m\|_{\infty,\ot}$ is also bounded $\|\nabla p\|_{\infty,\ot}^2$. Our theorem can be established by choosing $T$ suitably small. 
Then we show that we can extend our solution in the time direction as far away as we want.

Note that our assumption on $s(x) $ is more than necessary. If we check our calculations carefully, it is enough to assume $s(x)\in L^q(\Omega)$ for $q$ sufficiently large. Then we must use functions in $L^{\infty}(\Omega)\cap W^{1,q}(\Omega)$ to approximate $s(x)$ in order to gain enough regularity for $p$ to justify the calculations in the derivation of \eqref{wine11}.

Finally, let us make some remarks about notations. The letter $c$ is used to denote a positive number whose value can be computed from given data. The capital letters such as $A, B, \cdots$ are often used to represent $2\times 2$ matrices. The $ij$-entry of $A$ is denoted by $a_{ij}$. The boldface letters are used to denote vector quantities. The $i$-th entry of $\mathbf{F}$ is $f_i$.

\section{Preliminary results} In this section we first collect some formulas about differentiating matrix-valued functions. Then we prove that local H\"{older} continuity of $p$ in the space variables implies the local boundedness of $\m$.

Denote by $M^{2\times 2}$ the space of all $2\times 2$ matrices. 
We invoke the following notation conventions
\begin{eqnarray*}
	A:B&=& a_{ij}b_{ij}\ \ \ \mbox{for $A, B\in M^{2\times 2}$,}\\
	\mathbf{G}\otimes\mathbf{F}&=&\mathbf{G}\mathbf{F}^T,\ \ \left(\mathbf{G}\cdot\mathbf{F}\right)=\mathbf{G}\cdot\mathbf{F}=\mathbf{G}^T\mathbf{F} \ \ \mbox{for two (column) vectors $\mathbf{G}, \mathbf{F}$}.
\end{eqnarray*}
If $A(x)$ is a matrix-valued function then
\begin{eqnarray*}
	\mbox{div}A(x)&=&\mbox{the row vector whose $i$-th entry is the divergence of the  $i$-th column of $A$ } \nonumber\\
	&=&(\mbox{div}\mathbf{A}_1,\mbox{div}\mathbf{A}_2).
\end{eqnarray*}
When $\mathbf{G(x)}$ is a vector-valued function, then
\begin{eqnarray*}
	\nabla\mathbf{G(x)}&=&\mbox{the $2\times 2$ matrix whose $ij$-entry is $(g_j(x))_{x_i}$}\nonumber\\
	& =&(\nabla g_1, \nabla g_2).
\end{eqnarray*}
Denote by $\nabla^2p$  the Hessian of $p$. Then we have
\begin{equation*}
\nabla|\nabla p|^2= 2\nabla^2p\nabla p.
\end{equation*}
The following identities will be frequently used
\begin{eqnarray}
\nabla \left(\mathbf{F}\cdot\mathbf{G}\right)&=&\nabla \mathbf{F}\mathbf{G}+\nabla\mathbf{G}\mathbf{F},\label{form1}\\
\mbox{div}\left(A\mathbf{F}\right)&=& A:\nabla\mathbf{F} +\mbox{div}A\mathbf{F},\label{form2}\\
\nabla\left(A\mathbf{F}\right)&=& \nabla\mathbf{F}A^T+\left(
A_{x_1}\mathbf{F},
A_{x_2}\mathbf{F}
\right)^T,\label{form3}\\
\mbox{div}(pA)&=&p\mbox{div}A+(\nabla p)^TA.\label{form4}
\end{eqnarray}
We also need the interpolation inequality
\begin{equation}\label{inter}
\|u\|_q\leq \varepsilon\|u\|_r+\varepsilon^{-\mu}\|u\|_\ell,
\end{equation}
where $1\leq \ell\leq q\leq r$ with $\mu=\left(\frac{1}{\ell}-\frac{1}{q}\right)/\left(\frac{1}{q}-\frac{1}{r}\right)$.

The next lemma deals with sequences of nonnegative numbers
which satisfy certain recursive inequalities.
\begin{lemma}\label{ynb}
	Let $\{y_n\}, n=0,1,2,\cdots$, be a sequence of positive numbers satisfying the recursive inequalities
	\begin{equation*}
	y_{n+1}\leq cb^ny_n^{1+\alpha}\ \ \mbox{for some $b>1, c, \alpha\in (0,\infty)$.}
	\end{equation*}
	If
	\begin{equation*}
	y_0\leq c^{-\frac{1}{\alpha}}b^{-\frac{1}{\alpha^2}},
	\end{equation*}
	then $\lim_{n\rightarrow\infty}y_n=0$.
\end{lemma}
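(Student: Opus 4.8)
The plan is to show, by induction on $n$, that the sequence decays geometrically, namely that
\[
y_n \le b^{-n/\alpha}\, y_0 \qquad \text{for all } n \ge 0 ,
\]
from which the conclusion is immediate: since $b>1$ and $\alpha>0$, we have $b^{-n/\alpha}\to 0$, hence $y_n\to 0$.

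For the base case $n=0$ the inequality is the trivial equality $y_0\le y_0$. For the inductive step, assume $y_n \le b^{-n/\alpha} y_0$. Plugging this into the recursive inequality and using that $t\mapsto t^{1+\alpha}$ is increasing on $[0,\infty)$, together with the exponent identity $n-\tfrac{n(1+\alpha)}{\alpha}=-\tfrac{n}{\alpha}$, gives
\[
y_{n+1}\le c\, b^n y_n^{1+\alpha}\le c\, b^n\bigl(b^{-n/\alpha} y_0\bigr)^{1+\alpha}= c\, b^{-n/\alpha}\, y_0^{1+\alpha}.
\]
It then suffices to verify $c\, b^{-n/\alpha} y_0^{1+\alpha}\le b^{-(n+1)/\alpha} y_0$, and after dividing both sides by $b^{-n/\alpha} y_0$ this reads $c\, y_0^{\alpha}\le b^{-1/\alpha}$, i.e. $y_0\le c^{-1/\alpha} b^{-1/\alpha^2}$ — which is exactly the standing hypothesis on $y_0$. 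This closes the induction.

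The only point requiring any foresight is the choice of the inductive ansatz: a pure geometric decay with ratio $b^{-1/\alpha}$, together with the observation that the smallness threshold imposed on $y_0$ is calibrated precisely so that this ansatz propagates from $n$ to $n+1$. An alternative would be to pass to $z_n=\ln y_n$ and study the affine recursion $z_{n+1}\le \ln c+n\ln b+(1+\alpha)z_n$, but the direct induction is cleaner and avoids any case analysis on the sign of $\ln y_0$. There is no genuine obstacle here — this is the classical fast-convergence (De Giorgi–Stampacchia) iteration lemma, and the argument above is entirely elementary.
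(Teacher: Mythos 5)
Your proof is correct. The paper does not prove the lemma itself but cites DiBenedetto (\cite{D}, p.~12), and the argument there is precisely the geometric-decay induction you give: one shows $y_n \le b^{-n/\alpha} y_0$ by induction, with the threshold $y_0 \le c^{-1/\alpha} b^{-1/\alpha^2}$ being exactly what is needed to push the ansatz from $n$ to $n+1$. Your computation of the exponent $n - n(1+\alpha)/\alpha = -n/\alpha$ and the reduction of the inductive step to $c\,y_0^\alpha \le b^{-1/\alpha}$ are both accurate, so the argument is complete and matches the standard (and cited) proof.
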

This lemma can be found in (\cite{D}, p.12).


\begin{lemma}\label{phc1}
	Assume that the space dimension $N=2$.	If $p\in L^\infty(0,T;C_{\textup{loc}}^{0,\sigma}(\Omega))$ for some $\sigma\in (0,1)$, then $\m$ is locally bounded.
\end{lemma}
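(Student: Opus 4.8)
The plan is to combine two ingredients: an \emph{improved local integrability for $\nabla p$}, extracted from the H\"older continuity of $p$ together with the two--dimensional Sobolev embedding applied to $\m$, which will place $|\nabla p|^2$ in a \emph{subcritical Morrey class}; and a localized De Giorgi/Moser iteration for $u:=|\m|^2$, in which the zeroth--order term generated by the right--hand side of \eqref{e2} is absorbed by means of a Fefferman--Phong type inequality for Morrey potentials. For the first step I would fix $\Omega'\subset\subset\Omega$, and for a.e.\ $t$ and any ball $B_{2\rho}(x_0)\subset\Omega'$ with $\rho\le1$ test \eqref{e1} with $(p-\bar p)\zeta^2$, where $\bar p=\bar p(t)$ is the average of $p(\cdot,t)$ over $B_{2\rho}(x_0)$ and $\zeta\in C_c^\infty(B_{2\rho}(x_0))$ is a standard cutoff, $\zeta\equiv1$ on $B_\rho(x_0)$, $|\nabla\zeta|\le c/\rho$. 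Writing $(I+\m\m^T)\nabla p\cdot\nabla\zeta=\nabla p\cdot\nabla\zeta+(\m\cdot\nabla p)(\m\cdot\nabla\zeta)$, using only the lower bound $(I+\m\m^T)\xi\cdot\xi\ge|\xi|^2$ and Young's inequality, one reaches after absorption
\begin{equation*}
\int_{B_\rho(x_0)}\!\!\Bigl(|\nabla p|^2+(\m\cdot\nabla p)^2\Bigr)dx\le c\,\omega_\rho^2\!\int_{B_{2\rho}}\!\!\bigl(1+|\m|^2\bigr)|\nabla\zeta|^2dx+c\,\|s\|_\infty\,\omega_\rho\,\rho^2,\quad \omega_\rho:=\|p(\cdot,t)-\bar p\|_{L^\infty(B_{2\rho})}.
\end{equation*}
Since $p\in L^\infty(0,T;C_{\textup{loc}}^{0,\sigma}(\Omega))$ gives $\omega_\rho\le c\rho^\sigma$ uniformly in $t$, and since $\m(\cdot,t)\in W^{1,2}_0(\Omega)$ uniformly in $t$ with $N=2$ gives $\m(\cdot,t)\in L^q(\Omega)$ for all $q<\infty$, hence $\int_{B_{2\rho}}|\m|^2\le c_q\rho^{2-4/q}$ and $\int_{B_{2\rho}}(1+|\m|^2)|\nabla\zeta|^2\le c_q\rho^{-4/q}$, this yields
\begin{equation*}
\int_{B_\rho(x_0)}|\nabla p(\cdot,t)|^2\,dx\le c_q\,\rho^{\,2\sigma-4/q}\qquad\text{for all }B_{2\rho}(x_0)\subset\Omega',\ \rho\le1,
\end{equation*}
uniformly in $t$. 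Choosing $q$ large enough that $\lambda:=2\sigma-\tfrac4q\in(0,2)$, I conclude that $V:=|\nabla p|^2$ lies in $L^\infty(0,T;M^{1,\lambda}(\Omega'))$, i.e.\ $\sup_{B_r(x)\subset\Omega'}r^{-\lambda}\int_{B_r(x)}|\nabla p|^2\,dx$ is bounded uniformly in $t$ --- a Morrey potential of \emph{positive} exponent, which in dimension two ($N-2=0$) is subcritical for $-\Delta$ (equivalently, $V$ is $L^\infty$ in time with values in the planar Kato class).

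For the second step, dotting \eqref{e2} with $\m$ and using $-\m\cdot\Delta\m=-\tfrac12\Delta|\m|^2+|\nabla\m|^2$ shows that $u:=|\m|^2\ge0$ satisfies, in the sense of distributions,
\begin{equation*}
\partial_t u-\alpha^2\Delta u+2\alpha^2|\nabla\m|^2+2|\m|^{2\gamma}=2\beta^2(\m\cdot\nabla p)^2\le2\beta^2\,Vu ,
\end{equation*}
so $u$ is a subsolution of $\partial_t u-\alpha^2\Delta u\le2\beta^2Vu$; moreover $u\in L^\infty(0,T;L^r(\Omega))$ for every $r<\infty$, again by the two--dimensional Sobolev embedding applied to $\m\in L^\infty(0,T;W^{1,2}_0(\Omega))$. (For a merely weak solution these manipulations are legitimized after a routine mollification of \eqref{e2}.) I would then run the iteration on small parabolic cylinders $Q_{\rho_0}=B_{\rho_0}(x_0)\times(t_0-\rho_0^2,t_0)\subset\subset\Omega_T$: testing the inequality with $u^{q-1}\zeta^2$ (or with De Giorgi truncations), $\zeta$ a cutoff adapted to $Q_{\rho_0}$, the only nonstandard contribution is $\iint Vu^q\zeta^2=\iint V\bigl(u^{q/2}\zeta\bigr)^2$, which is controlled, for each $\varepsilon>0$, by the Fefferman--Phong/Morrey inequality
\begin{equation*}
\int_{B_{\rho_0}}V(\cdot,t)\,w^2\,dx\le\varepsilon\int_{B_{\rho_0}}|\nabla w|^2\,dx+C_\varepsilon\int_{B_{\rho_0}}w^2\,dx,\qquad w\in W^{1,2}_0(B_{\rho_0}),
\end{equation*}
with $C_\varepsilon$ depending only on $\varepsilon$, $\lambda$ and $\|V\|_{L^\infty_tM^{1,\lambda}}$, and --- because $\lambda>0$ --- growing only polynomially as $\varepsilon\to0$. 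Absorbing the gradient part into the diffusion and treating $C_\varepsilon\int u^q$ as a lower--order term, the local energy estimates close the Moser scheme (respectively the De Giorgi iteration via Lemma \ref{ynb}), giving $\sup_{Q_{\rho_0/2}}u\le c\bigl(1+\iint_{Q_{\rho_0}}u^2\bigr)^{1/2}<\infty$. Covering any compact subset of $\Omega_T$ by finitely many such cylinders then yields $u=|\m|^2\in L^\infty_{\textup{loc}}(\Omega_T)$, i.e.\ $\m$ is locally bounded.

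The main obstacle is the first step. Using only $\m\in L^\infty_tL^2_x$ there, the Caccioppoli estimate reproduces nothing beyond $\nabla p\in L^2_{\textup{loc}}$, so that $V\in L^\infty_tL^1_x$ --- precisely the scaling--critical class in dimension two --- for which the iteration of the second step cannot be closed. What makes the argument work is the specifically two--dimensional gain ``$W^{1,2}\hookrightarrow L^q$ for every $q<\infty$'' for $\m$, which (combined with the H\"older continuity of $p$) upgrades $|\nabla p|^2$ to a \emph{subcritical} Morrey potential; identifying a strictly positive Morrey exponent is the one delicate point, after which the rest is a variant of the De Giorgi machinery already in use elsewhere in the paper.
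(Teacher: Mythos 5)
Your proof is correct, but it takes a genuinely different route from the paper's, and in fact uses more machinery than it needs. Both arguments begin with the same Caccioppoli computation, testing \eqref{e1} with $(p-\bar p)\zeta^2$ and using the H\"older modulus $\omega_\rho\le c\rho^\sigma$ together with the high integrability of $\m$ in two dimensions. The paper, however, reads off the resulting Morrey bound directly on the good term $(\m\cdot\nabla p)^2$ sitting on the \emph{left}-hand side of that estimate (obtaining $\int_{B_r}(\m\cdot\nabla p)^2\,dx\le cr^\varepsilon$ for some $\varepsilon>0$), views $2\beta^2(\m\cdot\nabla p)^2$ as a \emph{known forcing term} for the heat operator, and concludes by a comparison principle for $u=|\m|^2$ against the explicit solution $w$ of the linear heat equation with that forcing, invoking the Morrey--class parabolic regularity result of Yuan \cite{Y} to get H\"older continuity (hence boundedness) of $w$. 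You instead discard the Morrey information on $(\m\cdot\nabla p)^2$ in favor of the weaker consequence $V:=|\nabla p|^2\in L^\infty_tM^{1,\lambda}$, then close the loop by treating $2\beta^2Vu$ as a zeroth--order \emph{potential} term, which forces you to bring in the Fefferman--Phong/Kato--class form boundedness ($M^{1,\lambda}\hookrightarrow K_2$ for $\lambda>0$ in the plane, with $C_\varepsilon$ controlled via a dyadic decomposition) and to run a full Moser/De Giorgi iteration in a situation where the linear heat-comparison suffices. Your argument is sound — the Kato-class absorption and the polynomial growth of $C_\varepsilon$ are both available because $\lambda>0$ — but the paper's observation that the Caccioppoli inequality \emph{already} delivers a subcritical Morrey bound on the exact nonlinearity appearing on the right-hand side of \eqref{e2} is what makes the shorter route possible; with that in hand, no potential-theoretic absorption is needed at all. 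It is worth noting that your own displayed Caccioppoli estimate contains $(\m\cdot\nabla p)^2$ on the left, so the simpler conclusion was within reach of your computation.
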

\begin{proof} We infer from (D1) that 
	\begin{equation}\label{mint}
	\sup_{0\leq t\leq T}\io|\m|^sdx<\infty\ \ \mbox{for each $s>1$.}
	\end{equation}
	In fact, by Theorem 7.15 in (\cite{GT}, p. 162), there is a positive number $c_0$ such that
	\begin{equation*}
	\io e^{c_0 |\m|}dx< \infty.
	\end{equation*}
	Fix $y\in\Omega$. For $r\in (0,\dy)$ we choose a smooth cutoff function $\xi$ with the properties
	\begin{eqnarray*}
		\xi(x)&=& 1\ \ \mbox{on $B_{\frac{r}{2}}(y)$},\\  
		\xi(x)&=& 0\ \ \mbox{outside $B_{r}(y)$, }\\ 
		0&\leq &\xi\leq 1\ \ \mbox{on $B_{r}(y)$, and }\\
		|\nabla\xi|&\leq &\frac{c}{r}\ \ \mbox{on $B_{r}(y)$. }
	\end{eqnarray*}
	We use $(p-p_{y,r}(t))\xi^2 $ as a test function in \eqref{e1} to get
	\begin{eqnarray*}
		\lefteqn{\ib|\nabla p|^2\xi^2dx+\ib\nabla p (p-p_{y,r}(t))2\xi\nabla\xi dx}\nonumber\\
		&&+\ib\mnp^2\xi^2dx+\ib\mnp\m(p-p_{y,r}(t))2\xi\nabla\xi dx\nonumber\\
		&=&\ib s(x)(p-p_{y,r}(t))\xi^2dx,
	\end{eqnarray*}
	from whence follows
	\begin{eqnarray*}
		\lefteqn{\ib|\nabla p|^2\xi^2dx+\ib\mnp^2\xi^2dx}\nonumber\\
		&\leq &\frac{c}{r^2}\ib(p-p_{y,r}(t))^2dx+\frac{c}{r^2}\ib|\m|^2(p-p_{y,r}(t))^2dx\nonumber\\
		&&+\ib s(x)(p-p_{y,r}(t))\xi^2dx\nonumber\\
		&\leq &cr^{2\sigma}+cr^{2\sigma-2}\ib|\m|^2dx+cr^{\sigma}\ib s(x)dx\nonumber\\
		&\leq &cr^{2\sigma}+cr^{2\sigma-2+\frac{2}{s}}\left(\ib|\m|^{\frac{2s}{s-1}}dx\right)^{\frac{s-1}{s}}+cr^{\sigma}cr^{\sigma}.
	\end{eqnarray*}
	By choosing $s$ sufficiently close to $1$, we can find a positive number $\varepsilon$ such that
	\begin{equation}\label{phc}
	\ib\mnp^2dx\leq cr^\varepsilon.
	\end{equation}
	Take the dot product of \eqref{e2} with $\m$
	to obtain
	\begin{equation*}
	u_t-\alpha^2\Delta u+2\alpha^2|\nabla\m|^2+2u^\gamma=2\beta^2(\m\cdot\nabla p)^2,
	\end{equation*}
	where
	\begin{equation*}
	u=|\m|^2.
	\end{equation*}
	Consider the problem
	\begin{eqnarray}
	w_t-\alpha^2\Delta w&=&=2\beta^2(\m\cdot\nabla p)^2\ \ \mbox{in $\ot$},\label{we1}\\
	w&=&u\ \ \mbox{on $\partial_p\ot$}.
	\end{eqnarray}
	By the comparison principle, we have
	\begin{equation*}
	u\leq w.
	\end{equation*} 
	The right-hand side term in \eqref{we1} satisfies \eqref{phc}, a result in \cite{Y} asserts that $w$ is H\"{o}lder continuous. This implies the desired result. The proof is complete.
\end{proof}

\section{Boundedness for $\nabla p$ and $\nabla\m$}

In this section we will offer the proof of the main theorem. We shall begin by deriving \eqref{wine11}. As we discussed in the Introduction, we may assume that $(\m,p)$ is a weak solution that already satisfies the conclusion of the main theorem. This means that we begin with the local solution constructed in \cite{X6}. Then it turns out that  the local solution can be extended in the time direction as far away as we want. Therefore, we have
\eqref{r3} and \eqref{r2}, which are enough for our subsequent calculations.

Let $A$ be given as in \eqref{adf}. Recall from \eqref{form2} that
\begin{equation*}
\mbox{div}\left(A\nabla p\right)=A:\nabla^2p+\mbox{div}A\nabla p.
\end{equation*}
We can write \eqref{e1} in the form
\begin{equation}
\mbox{tr}(A\nabla^2p)=A:\nabla^2p=w,\label{eqfp}
\end{equation}
where
\begin{equation}\label{awdf}
w=-\left(\mbox{div}A\nabla p+s(x)\right).
\end{equation}
As in \cite{A}, we introduce the  function
\begin{eqnarray*}
	\phi&=&\ln v.
\end{eqnarray*}

\begin{theorem}\label{efp} Let $\mathbf{G}, \mathbf{H}, h, \mathbf{K}$ be given as before (see \eqref{gdef}, \eqref{vhdef}, \eqref{hdef}, and \eqref{kdef} for their respective definitions).
	Then the function $\phi$ satisfies the equation
	\begin{equation}\label{wine1}
	\textup{div}(A\nabla\phi)= \mathbf{H}\cdot\nabla\phi+h+\textup{div}\mathbf{K}\ \ \mbox{in 	 $\{v>0\}$}.
	\end{equation}
\end{theorem}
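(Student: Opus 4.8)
The plan is to derive \eqref{wine1} by a direct computation starting from the equation \eqref{eqfp}, namely $A:\nabla^2 p = w$, exactly mimicking the strategy of \cite{A} but without the normalization $\det A = 1$. First I would note that $\phi = \ln v$ with $v = A\nabla p\cdot\nabla p$, so $\nabla\phi = v^{-1}\nabla v$ and $\mathrm{div}(A\nabla\phi) = v^{-1}\,\mathrm{div}(A\nabla v) - v^{-2}(A\nabla v\cdot\nabla v)$. The core of the argument is therefore to compute $\nabla v$ and $\mathrm{div}(A\nabla v)$ in terms of $p$, $\nabla p$, $\nabla^2 p$, and derivatives of $A$, and then to eliminate the second-order quantities using the PDE. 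Using \eqref{form4} and \eqref{form1} one writes $\nabla v = \nabla(A\nabla p\cdot\nabla p)$; since $A$ is symmetric and using \eqref{vddef}, $\nabla v = 2\nabla^2 p\,(A\nabla p) + \big(A_{x_1}\nabla p\cdot\nabla p,\ A_{x_2}\nabla p\cdot\nabla p\big)^T = 2\nabla^2 p\,\nu + v\mathbf{G}$, which already explains the definition \eqref{gdef} of $\mathbf{G}$.

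Next I would differentiate once more to get $\mathrm{div}(A\nabla v)$. The delicate part is handling the term $\mathrm{div}\big(2A\nabla^2 p\,\nu\big)$: expanding this produces third-order derivatives of $p$ contracted against $A$ and against $\nu = A\nabla p$, which is where one must invoke the identity obtained by differentiating \eqref{eqfp}. Differentiating $A:\nabla^2 p = w$ in $x_k$ gives $A:\nabla^2 p_{x_k} = w_{x_k} - A_{x_k}:\nabla^2 p$, and contracting appropriately against $\nu$ and $\nabla p$ lets one trade the unwanted third-order terms for terms involving $w$, $\nabla w$, first and second derivatives of $A$, and $\nabla p$, $\nabla^2 p$. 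This is precisely the step that in \cite{A} was simplified by $\det A=1$; here one must carry the factor $\det A$ and its gradient $\nabla\det A$ through, which is the source of the auxiliary matrices $A_1, A_2, A_3$ in \eqref{a1}--\eqref{a3} and the correction terms built from $(\nabla p)^T A_i\,\nabla\det A$ appearing in $\mathbf{H}$, $h$, and the definition of $\mathbf{K}$ in \eqref{vhdef}, \eqref{hdef}, \eqref{kdef}. After this elimination, one regroups: the pieces that are genuine divergences are collected into $\mathrm{div}\mathbf{K}$, the pieces linear in $\nabla\phi$ (equivalently in $v^{-1}\nabla v$) are collected into $\mathbf{H}\cdot\nabla\phi$, and everything left over is the zeroth-order term $h$. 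Matching with \eqref{kdef}--\eqref{hdef} shows the appearance of $w$ through the combination $2v^{-1}w A\nabla p$ in both $\mathbf{K}$ and $h$, which comes from the $\nabla v = 2\nabla^2 p\,\nu + v\mathbf{G}$ identity fed into the second differentiation.

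I would organize the writeup as: (i) record $\nabla v = 2\nabla^2 p\,\nu + v\mathbf{G}$; (ii) compute $\mathrm{div}(A\nabla v)$ symbolically, isolating the third-order-in-$p$ contribution; (iii) use the differentiated equation $A:\nabla^2 p_{x_k} = w_{x_k} - A_{x_k}:\nabla^2 p$ together with the specific structure $A = I + \m\m^T$ (so that $A_{x_k}$, $\det A$ are explicit in $\m,\nabla\m,\nabla p$) to rewrite all third-order terms; (iv) divide by $v$, subtract the $v^{-2}(A\nabla v\cdot\nabla v)$ term coming from $\mathrm{div}(A\nabla\phi)$, and bookkeep the result into the three groups $\mathbf{H}\cdot\nabla\phi$, $h$, $\mathrm{div}\mathbf{K}$; (v) verify the identifications against \eqref{gdef}, \eqref{vhdef}, \eqref{hdef}, \eqref{kdef}. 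All computations are valid pointwise on $\{v>0\}$ because there $v^{-1}$ is smooth, and they are justified because, as noted before the theorem, the solution has the regularity \eqref{r3} and \eqref{r2}, so $p$ has enough derivatives (third-order in $x$ a.e.\ in $t$) for each differentiation above to make sense. The main obstacle is purely the algebraic bookkeeping in step (iv): without the normalization, $\nabla\det A$ does not vanish, so one must carefully track which terms are exact divergences versus gradient-coupling versus zeroth-order, and confirm that the awkward $\det A$ factors assemble exactly into the matrices $A_1,A_2,A_3$ and the stated formulas for $\mathbf{H}, h, \mathbf{K}$ — a routine but error-prone matrix computation rather than a conceptual difficulty.
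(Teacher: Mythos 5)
Your high-level plan is reasonable, but it diverges from the paper's actual proof at the crucial algebraic step, and your description of where the auxiliary objects come from is not accurate. The paper never differentiates the PDE $A:\nabla^2 p = w$. Instead it kills the would-be third-order terms with a pair of $2\times 2$-specific identities that you do not mention: (a) the Cayley--Hamilton-type identity
\begin{equation*}
A\nabla^2 p\,A \;=\; wA \;-\; \det(A)\,\det(\nabla^2 p)\,(\nabla^2 p)^{-1}
\;=\; wA \;+\; \det(A)\begin{pmatrix}-p_{x_2x_2}&p_{x_1x_2}\\ p_{x_1x_2}&-p_{x_1x_1}\end{pmatrix},
\end{equation*}
which follows from the PDE together with the $2\times 2$ adjugate formula, and (b) the Piola identity $\textup{div}\begin{pmatrix}-p_{x_2x_2}&p_{x_1x_2}\\ p_{x_1x_2}&-p_{x_1x_1}\end{pmatrix}=\mathbf{0}$, so that $\textup{div}(A\nabla^2 p\,A)$ contains no third-order derivatives of $p$ at all. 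Using these, the paper writes $\textup{div}(A\mathbf{E}) = 2\nabla^2 p:(A\nabla^2 p A) + 2\,\textup{div}(A\nabla^2 p A)\nabla p$ with $\mathbf{E}=2\nabla^2 p A\nabla p$, and the $\det(A)\det(\nabla^2 p)$ contributions from $v^{-1}\textup{div}(A\mathbf{E})$ and $v^{-2}A\mathbf{E}\cdot\mathbf{E}$ cancel exactly. Your proposal of contracting the differentiated equation $A:\nabla^2 p_{x_k} = w_{x_k} - A_{x_k}:\nabla^2 p$ against $\nu$ does happen to hit the third-order contraction in the right pattern (because the third-order block in $\textup{div}(A\mathbf{E})$ is $2(A:\nabla^2 p_{x_k})\nu_k$), so that path is not outright wrong, but it produces $w_{x_k}$-terms that carry $\nabla^2\m$, which you must recognize as a total divergence and fold into $\textup{div}\mathbf{K}$ rather than expand; the paper's route sidesteps this by never generating $\nabla w$ outside an intact divergence.

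A second, more substantive misidentification: the matrices $A_1, A_2, A_3$ do \emph{not} arise as ``correction terms built from $(\nabla p)^T A_i\,\nabla\det A$'' in a generic bookkeeping. They arise from solving the explicit $3\times 3$ linear system \eqref{pp1}--\eqref{pp3} (the two components of $\mathbf{E}=2\nabla^2 p\,\nu$ together with the PDE itself) for $p_{x_1x_1}, p_{x_1x_2}, p_{x_2x_2}$ by Cramer's rule, after checking $\det E = 4\det(A)\,v\neq 0$. It is this inversion that lets one express the leftover cofactor matrix multiplying $\nabla\det A$ in terms of $\mathbf{E}$, $w$, and $\nabla p$ (hence in terms of $\nabla\phi$, $\mathbf{G}$, and $w$), and it is exactly where the formulas \eqref{a1}--\eqref{a3} come from. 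Your outline never proposes this inversion, so as written it would not terminate in the stated definitions of $\mathbf{H}$, $h$, $\mathbf{K}$: after using the differentiated PDE you would still be left with $\nabla^2 p$-dependent terms with no mechanism to eliminate them. In short, your proposal is a plausible alternative starting move, but it omits the two identities (Cayley--Hamilton plus Piola) that make the paper's computation close and, more importantly, omits the Cramer's-rule elimination without which neither approach can reach \eqref{wine1}.
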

\begin{proof}
	The  identity 
	\begin{equation}\label{iden1}
	\mbox{div}(A\nabla\phi)=v^{-1}\mbox{div}(A\mathbf{E})-v^{-2}A\mathbf{E}\cdot\mathbf{E}-v^{-1}A\mathbf{G}\cdot\mathbf{E}+\mbox{div}(A\mathbf{G}),
	\end{equation}
	where
	\begin{equation}\label{edf}
	\mathbf{E}=2\nabla^2pA\nabla p,
	\end{equation}
	in \cite{A} is still valid here.
	To see this, we compute from \eqref{form1} and \eqref{form3} that
	\begin{eqnarray}
	\nabla\phi&=&\frac{1}{v}\nabla v=\frac{1}{v}\nabla \left(A\nabla p\cdot\nabla p\right)\nonumber\\
	&=&\frac{1}{v}\left(\nabla(A\nabla p)\nabla p+\nabla^2pA\nabla p\right)\nonumber\\
	&=&\frac{1}{v}\left(\left(A_{x_1}\nabla p,A_{x_2}\nabla p\right)^T\nabla p+2\nabla^2pA\nabla p\right)\nonumber\\
	&=&\mathbf{G}+v^{-1}\mathbf{E}.\label{etp}
	\end{eqnarray}
	Consequently,
	\begin{eqnarray}
	\mbox{div}(A\nabla\phi)&=&v^{-1}\mbox{div}(A\mathbf{E})-v^{-2}\nabla v\cdot A\mathbf{E}+\mbox{div}(A\mathbf{G})\nonumber\\
	&=&v^{-1}\mbox{div}(A\mathbf{E})-v^{-2}\mathbf{E}\cdot A\mathbf{E}-v^{-1}\mathbf{G}\cdot A\mathbf{E}+\mbox{div}(A\mathbf{G})\nonumber\\
	&=&v^{-1}\mbox{div}(A\mathbf{E})-v^{-2}A\mathbf{E}\cdot\mathbf{E}-v^{-1}A\mathbf{G}\cdot\mathbf{E}+\mbox{div}(A\mathbf{G}).\label{hope3}
	\end{eqnarray}
	The last step is due to the fact that $A$ is symmetric. The first two terms on the right-hand side of the above equation are troubling. One contains third order partial derivatives of $p$, while the other is quadratic in $\mathbf{E}$. It turns out that both terms can be represented in terms of $\mbox{det}(\nabla^2p)$. After we substitute them back into \eqref{hope3}, the $\mbox{det}(\nabla^2p)$ terms get canceled out. We shall do this by finding a suitable formula for the matrix $D$ defined by
	\begin{equation*}
	D=A\nabla^2pA.
	\end{equation*}
	An elementary calculation shows that the four entries of $D$ are as follows
	\begin{eqnarray*}
		d_{11}&=&a_{11}^2p_{x_1x_1}+2a_{11}a_{12}p_{x_1x_2}+a_{12}^2p_{x_2x_2},\\
		d_{12}&=&a_{11}a_{12}p_{x_1x_1}+(a_{11}a_{22}+a_{12}^2)p_{x_1x_2}+a_{22}a_{12}p_{x_2x_2},\\
		d_{21}&=&d_{12},\\
		d_{22}&=&a_{12}^2p_{x_1x_1}+2a_{12}a_{22}p_{x_1x_2}+a_{22}^2p_{x_2x_2}.
	\end{eqnarray*}
	Using \eqref{eqfp},
	we obtain
	\begin{eqnarray*}
		\lefteqn{A\nabla^2pA}\nonumber\\
		&=&\left(\begin{array}{cc}
			a_{11}w-\mbox{det}(A)p_{x_2x_2}&a_{12}w+\mbox{det}(A)p_{x_1x_2}\\
			a_{12}w+\mbox{det}(A)p_{x_1x_2}&
			a_{22}w-\mbox{det}(A)p_{x_1x_1}
		\end{array}\right)\nonumber\\
		&=&wA+\mbox{det}(A)\left(\begin{array}{cc}
			-p_{x_2x_2}&p_{x_1x_2}\\
			p_{x_1x_2}&
			-p_{x_1x_1}
		\end{array}\right)=wA-\mbox{det}(A)\mbox{det}(\nabla^2 p)(\nabla^2 p)^{-1}.
	\end{eqnarray*}
	Now we are in a position to calculate that
	\begin{eqnarray}
	\mbox{div}(A\mathbf{E})&=&2\mbox{div}(A\nabla^2pA\nabla p)\nonumber\\
	&=&2\nabla^2p:(A\nabla^2 pA)+2\mbox{div}(A\nabla^2pA)\nabla p.\label{aef}
	\end{eqnarray}
	Applying the formula for $A\nabla^2pA$ yields
	\begin{eqnarray*}
		\nabla^2p:(A\nabla^2 pA)&=&\nabla^2p:wA-\mbox{det}(A)\mbox{det}(\nabla^2 p)\nabla^2p:(\nabla^2 p)^{-1}\nonumber\\
		&=&w^2-2\mbox{det}(A)\mbox{det}(\nabla^2 p).
	\end{eqnarray*}
	Similarly,
	\begin{eqnarray*}
		\mbox{div}(A\nabla^2pA)\nabla p&=&\mbox{div}(wA)\nabla p+\mbox{div}\left[\mbox{det}(A)\left(\begin{array}{cc}
			-p_{x_2x_2}&p_{x_1x_2}\\
			p_{x_1x_2}&
			-p_{x_1x_1}
		\end{array}\right)\right]\nabla p\nonumber\\
		&=&\mbox{div}(wA\nabla p)-w^2+(\nabla\mbox{det}(A))^T\left(\begin{array}{cc}
			-p_{x_2x_2}&p_{x_1x_2}\\
			p_{x_1x_2}&
			-p_{x_1x_1}
		\end{array}\right)\nabla p.
	\end{eqnarray*}
	Here we have used \eqref{form4} and the fact that $\mbox{div}\left(\begin{array}{cc}
	-p_{x_2x_2}&p_{x_1x_2}\\
	p_{x_1x_2}&
	-p_{x_1x_1}
	\end{array}\right)=\mathbf{0}$.
	Collecting the preceding two results in \eqref{aef} gives
	\begin{eqnarray}
	\mbox{div}(A\mathbf{E})
	&=&-4\mbox{det}(A)\mbox{det}(\nabla^2p)\nonumber\\
	&&+2\mbox{div}(wA\nabla p)+2(\nabla\mbox{det}(A))^T\left(\begin{array}{cc}
	-p_{x_2x_2}&p_{x_1x_2}\\
	p_{x_1x_2}&
	-p_{x_1x_1}
	\end{array}\right)\nabla p.\label{identy2}
	\end{eqnarray} 
	As for $A\mathbf{E}\cdot\mathbf{E}$, we have
	\begin{eqnarray*}
		A\mathbf{E}\cdot\mathbf{E}&=&\mathbf{E}^TA\mathbf{E}\nonumber\\
		&=&4(\nabla p)^TA\nabla^2pA\nabla^2pA\nabla p\nonumber\\
		&=&4(\nabla p)^T(wA-\mbox{det}(A)\mbox{det}(\nabla^2 p).(\nabla^2 p)^{-1})\nabla^2pA\nabla p\nonumber\\
		&=&4(\nabla p)^TwA\nabla^2pA\nabla p-4(\nabla p)^T\mbox{det}(A)\mbox{det}(\nabla^2 p)A\nabla p\nonumber\\
		&=&2wA\mathbf{E}\cdot\nabla p-4v\mbox{det}(A)\mbox{det}(\nabla^2 p).
	\end{eqnarray*}
	We are ready to calculate
	\begin{eqnarray*}
		\lefteqn{v^{-1}\mbox{div}(A\mathbf{E})-v^{-2}A\mathbf{E}\cdot\mathbf{E}}\nonumber\\
		&=&-4v^{-1}\mbox{det}(A)\mbox{det}(\nabla^2p)+2v^{-1}\mbox{div}(wA\nabla p)\nonumber\\
		&&+2v^{-1}(\nabla\mbox{det}(A))^T\left(\begin{array}{cc}
			-p_{x_2x_2}&p_{x_1x_2}\\
			p_{x_1x_2}&
			-p_{x_1x_1}
		\end{array}\right)\nabla p\nonumber\\
		&&-v^{-2}(2wA\mathbf{E}\cdot\nabla p-4v\mbox{det}(A)\mbox{det}(\nabla^2 p))\nonumber\\
		&=&2v^{-1}\mbox{div}(wA\nabla p)-2v^{-2}wA\nabla p\cdot\mathbf{E}\nonumber\\
		&&+2v^{-1}\left(\begin{array}{cc}
			-p_{x_2x_2}&p_{x_1x_2}\\
			p_{x_1x_2}&
			-p_{x_1x_1}
		\end{array}\right)\nabla p\cdot\nabla\mbox{det}(A).
	\end{eqnarray*}
	We still need to eliminate the second partial derivatives of $p$ in the last term of of the preceding equation. If $ \mbox{det}(A)$ had been $1$, then this term would be zero, and hence the proof would conclude. 
	Since we do not have the benefit, we need to continue. In view of \eqref{vddef}, we can deduce from \eqref{edf} and \eqref{eqfp} that
	\begin{eqnarray}
	2\nu_1p_{x_1x_1}+2\nu_2 p_{x_1x_2}&=&e_1,\label{pp1}\\
	2\nu_1p_{x_1x_2}+2\nu_2&=&e_2,\label{pp2}\\
	a_{11}p_{x_1x_1}+2a_{12}p_{x_1x_2}+a_{22}p_{x_2x_2}&=& w.\label{pp3}
	\end{eqnarray}
	Denote by $E$ the coefficient matrix of the above system. Then
	\begin{eqnarray*}
		\mbox{det}E&=&\mbox{det}\left(\begin{array}{ccc}
			2\nu_1&2\nu_2&0\\
			0&2\nu_1&2\nu_2\\
			a_{11}&2a_{12}&a_{22}\end{array}\right)\nonumber\\
		&=&2\nu_1(2\nu_1a_{22}-4\nu_2a_{12})+4a_{11}\nu_2^2\nonumber\\
		&=&2(a_{11}p_{x_1}+a_{12}p_{x_2})\left[2a_{22}(a_{11}p_{x_1}+a_{12}p_{x_2})-4a_{12}(a_{12}p_{x_1}+a_{22}p_{x_2})\right]\nonumber\\
		&&+4a_{11}(a_{12}p_{x_1}+a_{22}p_{x_2})^2\nonumber\\
		&=&4(a_{11}p_{x_1}+a_{12}p_{x_2})\left[(a_{22}a_{11}-2a_{12}^2)p_{x_1}-a_{22}a_{12}p_{x_2}\right]\nonumber\\
		&&+4a_{11}(a_{12}p_{x_1}+a_{22}p_{x_2})^2\nonumber\\
		&=&4a_{11}(a_{22}a_{11}-a_{12}^2)p_{x_1}^2 +8(a_{11}a_{22}a_{12}-a_{12}^3)p_{x_1}p_{x_2}\nonumber\\
		&&+4(a_{11}a_{22}^2-a_{22}a_{12}^2)p_{x_2}^2\nonumber\\
		&=&4\mbox{det}(A)\left(a_{11}p_{x_1}^2 +2a_{12}p_{x_1}p_{x_2}+a_{22}p_{x_2}^2\right)\nonumber\\
		&=&4\mbox{det}(A)v\ne 0.
	\end{eqnarray*}
	By Cramer's rule, we have
	\begin{eqnarray*}
		p_{x_1x_1}
		&=&\frac{1}{2\mbox{det}(A)v}\left[((a_{22}a_{11}-2a_{12}^2)p_{x_1}-a_{12}a_{22}p_{x_2})e_1-a_{22}\nu_2e_2+2w\nu_2^2\right]\nonumber,\\
		p_{x_1x_2}&=&\frac{1}{2\mbox{det}(A)v}\left(a_{11}\nu_2e_1+a_{22}\nu_1e_2-2w\nu_1\nu_2\right),\\
		p_{x_2x_2}&=&\frac{1}{2\mbox{det}(A)v}\left[-a_{11}\nu_1e_1+(-a_{12}a_{11}p_{x_1}+(a_{11}a_{22}-2a_{12}^2)p_{x_2})e_2+2w\nu_1^2\right].
	\end{eqnarray*}
	Recall \eqref{a1}-\eqref{a3}. We can represent the above solution to \eqref{pp1}-\eqref{pp3} in the form
	\begin{eqnarray*}
		\left(\begin{array}{cc}
			-p_{x_2x_2}&p_{x_1x_2}\\
			p_{x_1x_2}&-p_{x_1x_1}
		\end{array}\right)
		&=&\frac{1}{2\mbox{det}(A)v}\left(A_1\mathbf{E}, A_2\mathbf{E}\right)+\frac{w}{\mbox{det}(A)v}A_3.
	\end{eqnarray*}
	To summarize, we have
	\begin{eqnarray*}
		\lefteqn{	\mbox{div}(A\nabla\phi)}\nonumber\\
		&=&v^{-1}\mbox{div}(A\mathbf{E})-v^{-2}A\mathbf{E}\cdot\mathbf{E}-v^{-1}A\mathbf{G}\cdot\mathbf{E}+\mbox{div}(A\mathbf{G})\nonumber\\
		&=&2v^{-1}\mbox{div}(wA\nabla p)-2v^{-2}wA\nabla p\cdot\mathbf{E}\nonumber\\
		&&+2v^{-1}\left(\begin{array}{cc}
			-p_{x_2x_2}&p_{x_1x_2}\\
			p_{x_1x_2}&
			-p_{x_1x_1}
		\end{array}\right)\nabla p\cdot\nabla\mbox{det}(A)\nonumber\\
		&&-v^{-1}A\mathbf{G}\cdot\mathbf{E}+\mbox{div}(A\mathbf{G})\nonumber\\
		&=&2v^{-1}\mbox{div}(wA\nabla p)+\left(-2v^{-2}wA\nabla p-v^{-1}A\mathbf{G}\right)\cdot\mathbf{E}\nonumber\\
		&&+2v^{-1}\left(\frac{1}{2\mbox{det}(A)v}\left(A_1\mathbf{E}, A_2\mathbf{E}\right)+\frac{w}{\mbox{det}(A)v}A_3\right)\nabla p\cdot\nabla\mbox{det}(A)+\mbox{div}(A\mathbf{G})\nonumber\\
		&=&2v^{-1}wA\nabla p\cdot\nabla\phi+\frac{2w}{\mbox{det}(A)v^2}A_3\nabla p\cdot\nabla\mbox{det}(A)+\mbox{div}(A\mathbf{G}+2v^{-1}wA\nabla p)\nonumber\\
		&&+\left(-2v^{-2}wA\nabla p+\frac{1}{\mbox{det}(A)v^2}\left(\begin{array}{c}(\nabla p)^TA_1\\(\nabla p)^TA_2\end{array}\right)\nabla\mbox{det}(A)\right)\cdot v(\nabla\phi-\mathbf{G})\nonumber\\
		&&-v^{-1}A\mathbf{G}\cdot v(\nabla\phi-\mathbf{G})\nonumber\\
		&=&\left(\frac{1}{\mbox{det}(A)v}\left(\begin{array}{c}(\nabla p)^TA_1\\(\nabla p)^TA_2\end{array}\right)\nabla\mbox{det}(A)-A\mathbf{G}\right)\cdot \nabla\phi\nonumber\\
		&&+\left(2v^{-1}wA\nabla p-\frac{1}{\mbox{det}(A)v}\left(\begin{array}{c}(\nabla p)^TA_1\\(\nabla p)^TA_2\end{array}\right)\nabla\mbox{det}(A)+A\mathbf{G}\right)\cdot \mathbf{G}\nonumber\\
		&&+\frac{2w}{\mbox{det}(A)v^2}A_3\nabla p\cdot\nabla\mbox{det}(A)+\mbox{div}(A\mathbf{G}+2v^{-1}wA\nabla p)\nonumber\\
		&=&\mathbf{H}\cdot\nabla\phi+h+\mbox{div}\mathbf{K}.
	\end{eqnarray*}
	This completes the proof. 
\end{proof}

We would like to remark that the last part in our proof only works for two space dimensions. If the space dimension had been three, we would have six second order partial derivatives. But \eqref{edf} and \eqref{eqfp} would only give us four equations. Thus the same argument would fail. However, in the context of our proof, the last part becomes necessary only because we cannot normalize the coefficient matrix. Even if we could have done this, our argument would still only work for the two space dimensions. As we can easily see, it is not possible to represent $A\mathbf{E}\cdot\mathbf{E}$ in terms of $\mbox{det}\left(\nabla^2p\right)$ if the space dimensions are bigger than or equal three.
\begin{theorem}
	For each $r>1$ 
	there is a positive number $c$ such that
	\begin{equation}\label{npb}
	\|v\|_{\infty,\Omega }\leq  c\|\nabla\m\|^{2r}_{\infty, \Omega}+c.
	\end{equation}
\end{theorem}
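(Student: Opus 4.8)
The plan is to apply the De Giorgi iteration method to equation \eqref{wine11}, exploiting the fact that the singular/degenerate terms $\mathbf{H},h,\mathbf{K}$ are controlled by $|\m|,|\nabla\m|$ precisely when $v$ is large. First I would set $M\equiv\|\nabla\m\|_{\infty,\Omega}$ and, since $\m=0$ on $\partial\Omega$, deduce $\|\m\|_{\infty,\Omega}\leq cM$ (diameter of $\Omega$), so that $\det(A)=1+|\m|^2\leq 1+cM^2$ and the eigenvalues of $A$ lie in $[1,1+cM^2]$. Inspecting the definitions \eqref{gdef}, \eqref{vhdef}, \eqref{hdef}, \eqref{kdef}: $\mathbf G$ carries a factor $v^{-1}A_{x_i}\nabla p\cdot\nabla p$, and since $|A_{x_i}|\leq c|\m||\nabla\m|$ while $v\geq|\nabla p|^2$, one gets $|\mathbf G|\leq c|\m||\nabla\m|/|\nabla p|$; similarly, using $v\sim|\nabla p|^2$ up to the factor $1+|\m|^2$, the quantities $\mathbf H,\mathbf K$ are bounded by $cM^k$ times negative powers of $|\nabla p|$, and $h$ by $cM^k/|\nabla p|^2 = cM^k/v$ (up to $\det A$ factors). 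The key structural point is that on the set $\{v>\lambda\}$ with $\lambda$ a large power of $M$, all these coefficients become small or at least controlled by a fixed constant, so the equation behaves like a uniformly elliptic equation with bounded lower-order terms.

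Next I would run the iteration. Fix levels $k_j=\lambda(2-2^{-j})$ with $\lambda$ a large constant power of $M$ to be chosen, and set $w_j=(v-k_j)_+$ and $A_j=\{v>k_j\}$. Using $w_j$ (or a cutoff version of it) as a test function in the weak form of \eqref{wine11} — which is legitimate by the regularity \eqref{r2}, \eqref{r3} ensuring $v\in L^\infty(0,T;W^{1,s})$ and $\{|\nabla p|>0\}$ having the complement where $v=0$ handled separately — one obtains a Caccioppoli-type inequality
\begin{equation*}
\int_{A_j}|\nabla v|^2\,dx \leq c\int_{A_j}\bigl(|\mathbf H|^2 w_j^2 + |h| w_j + |\mathbf K|^2\bigr)\,dx + (\text{boundary terms}),
\end{equation*}
where, on $A_j\subset\{v>\lambda\}$ with $\lambda$ large, the bad coefficients contribute $\leq c M^{\kappa}(|A_j| + \int_{A_j}w_j^2)$ for some fixed exponent $\kappa$, and the $w_j^2$ term is absorbed if $\lambda$ is taken large enough relative to a Poincaré/Sobolev constant on the level sets. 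Then by the Sobolev inequality in two dimensions combined with Hölder's inequality one derives a recursive inequality of the form $|A_{j+1}|^{\text{something}} \leq c\, b^{j} \lambda^{-\delta}\,(\cdot)^{1+\alpha}$ for the quantities $y_j=\int_{A_j}w_j^2\,dx$ or $y_j=|A_j|$. Choosing $\lambda = cM^{2r}+c$ large enough guarantees $y_0$ satisfies the smallness hypothesis of Lemma \ref{ynb}, whence $y_j\to 0$ and therefore $v\leq 2\lambda$ a.e., which is \eqref{npb}.

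The boundary terms deserve care: $v=A\nabla p\cdot\nabla p$ on $\partial\Omega$ need not vanish, but on $\partial\Omega$ we have $p=0$, so $\nabla p$ is normal to $\partial\Omega$ and $v=|\nabla p|^2 + (\m\cdot\nabla p)^2$; since $|\m|=0$ on $\partial\Omega$, actually $v=|\nabla p|^2$ there, and $|\nabla p|$ on the boundary is bounded by the $W^{2,s}$-estimate for $p$ in terms of $s(x)$ and the geometry — crucially \emph{independently} of $M$ in its leading behavior, or at worst by a fixed power of $M$ that can be absorbed into the choice of $\lambda$. Thus the level $\lambda$ can be chosen above the boundary values of $v$, making $w_j$ legitimately vanish on $\partial\Omega$ for the test-function computation. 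Alternatively one localizes with interior cutoffs and handles a neighborhood of $\partial\Omega$ by the elliptic $W^{2,s}$ bound directly.

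I expect the main obstacle to be the careful bookkeeping of how the ``bad'' coefficients $\mathbf H,h,\mathbf K$ degenerate as $|\nabla p|\to 0$: the equation \eqref{wine11} holds only on $\{|\nabla p|>0\}$, and on the complement $v=0$ so $v\leq 2\lambda$ trivially there, but one must ensure the distributional/variational formulation used in the iteration is valid across the interface and that the factors $v^{-1},v^{-2}$ appearing in $\mathbf H,h,\mathbf K$ are genuinely compensated — for $v$ large, $v^{-1}|\cdot|$ is small, but for the test function $w_j$ supported on $\{v>k_j>\lambda\}$ this compensation is exactly what makes the absorbed terms small. Getting the exponent $2r$ in \eqref{npb} right — matching the number of powers of $M$ that accumulate from $\|\m\|_\infty\leq cM$, from $\det A\leq 1+cM^2$, and from the boundary gradient bound — is the delicate quantitative point, and is presumably why the statement allows an arbitrary $r>1$ rather than pinning down a single exponent.
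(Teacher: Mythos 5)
Your overall strategy -- De Giorgi iteration on \eqref{wine11}, exploiting that $\mathbf{H}$, $h$, $\mathbf{K}$ are controlled by $|\m|,|\nabla\m|$ on the superlevel sets of $v$ -- matches the paper's approach. But there are two genuine gaps that prevent the argument from closing.

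First, the Caccioppoli inequality you write has $\int_{A_j}|\nabla v|^2\,dx$ on the left, but testing \eqref{wine11} with $(v-k_j)^+$ (times a cutoff) actually produces $\int \frac{1}{v}A\nabla v\cdot\nabla(v-k_j)^+\,\theta^2$, i.e.\ the energy term carries a $1/v$ weight because the principal part of the equation is $\mathrm{div}(v^{-1}A\nabla v)$. This degeneracy cannot simply be dropped. The paper handles it by rewriting $\frac{1}{v}|\nabla(v-K_{n+1})^+|^2 = 4|\nabla(\sqrt v-\sqrt{K_{n+1}})^+|^2$ and then running the entire iteration in terms of $\sqrt v$ rather than $v$; the matching inequalities $\frac{1}{v}[(v-K_{n+1})^+]^2\le 4[(\sqrt v-\sqrt{K_{n+1}})^+]^2$ and $[(\sqrt v-\sqrt{K_n})^+]^2\ge 2^{-(n+4)}(v-K_{n+1})^+\chi_{S_{n+1}}$ are what make the $1/v$ weight harmless. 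Without this change of variable the recursion you set up does not come out cleanly.

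Second, and more importantly, the mechanism that produces the exponent $2r$ is not the one you describe. You propose to set the starting level $\lambda=cM^{2r}+c$ and then check the smallness hypothesis of Lemma \ref{ynb}, but the smallness of $y_0$ involves a norm of $v$ that you have no a priori control over beyond $\|v\|_{1,\Omega}\le c$ from \eqref{f5}, so the argument is circular as written. The paper instead chooses $K$ (the analogue of your $\lambda$) so that the smallness condition is satisfied automatically, obtaining the intermediate local bound $\sup_{B_{R/2}}v\le cR^{-2}y_0\,(\Gamma+R^{2(r-1)/r})+2$, globalizes via the covering argument of \cite{GT} to reach $\sup_\Omega v\le c\|v\|_{r,\Omega}(\|\nabla\m\|_\infty^2+1)+c$, and only then applies the interpolation inequality \eqref{inter} to write $\|v\|_{r,\Omega}\le\varepsilon\|v\|_{\infty,\Omega}+c\varepsilon^{-(r-1)}$, absorbing $\varepsilon\|v\|_\infty$ into the left side. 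It is precisely this last absorption, with $\varepsilon\sim(\|\nabla\m\|_\infty^2+1)^{-1}$, that turns the coefficient $(\|\nabla\m\|_\infty^2+1)$ into $(\|\nabla\m\|_\infty^2+1)^{r}$ and gives \eqref{npb}. Your proposal never invokes the $L^1$ energy bound or the interpolation step, so the $2r$ power is asserted rather than derived. (Related to this, your boundary discussion is also vaguer than needed: the paper sidesteps boundary values of $v$ entirely by doing the iteration on interior balls and extending via the standard covering argument, which is cleaner than trying to argue $|\nabla p|$ on $\partial\Omega$ is bounded uniformly in $M$.)

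Finally, a smaller point: you replace $\|\m\|_\infty$ by $cM$ via $\m=0$ on $\partial\Omega$; the paper instead uses the exponential integrability bound \eqref{mint} to absorb all the $|\m|$ factors in $\mathbf{H},h,\mathbf{K}$ into $L^{r/(r-1)}$ norms, leaving only $\|\nabla\m\|_\infty^2$ in $\Gamma$. Your version would inflate the power of $M$ appearing in $\Gamma$, which is not fatal for a ``for each $r>1$'' statement but would obscure the clean bookkeeping.
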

\begin{proof}
	Recall from \eqref{adf} that
	\begin{equation*}
	A=I+\m\m^T=\left(\begin{array}{cc}
	1+m_1^2&m_1m_2\\
	m_1m_2&1+m_2^2
	\end{array}\right),
	\end{equation*}
	and hence
	%
	%
	\begin{eqnarray}\label{ellip}
	|\mathbf{Y}|^2\leq A\mathbf{Y}\cdot \mathbf{Y}&\leq& (1+|\m|^2)|\mathbf{Y}|^2\ \ \mbox{for each $\mathbf{Y}\in \mathbb{R}^2$}.
	\end{eqnarray}
	It immediately follows that
	\begin{eqnarray}
	|\nabla p|^2\leq v&=&A\nabla p\cdot\nabla p=|\nabla p|^2+(\m\cdot\nabla p)^2\leq (1+|\m|^2)|\nabla p|^2,\label{coa}\\
	\mbox{det}A&=&1+|\m|^2.\nonumber
	\end{eqnarray}
	With these in mind, we can derive that
	\begin{eqnarray*}
		|\mathbf{G}|&\leq &c|\m||\nabla\m|,\\
		|w|&\leq & c|\m||\nabla\m||\nabla p|+|s(x)|,\\
		|A_1|,|A_2| &\leq &c(1+|\m|^4)|\nabla p|,\\
		|A_3| &\leq &c(1+|\m|^4)|\nabla p|^2.
	\end{eqnarray*}
	Let
	\begin{equation}\label{ddef}
	d=(1+|\m|^2)|\m||\nabla\m|.
	\end{equation}
	We can easily deduce that
	\begin{eqnarray}
	|\mathbf{H}|&\leq &cd,\label{ccoe1}\\
	|\mathbf{K}|&\leq &cd+c(1+|\m|^2)\frac{|s(x)|}{|\nabla p|},\label{ccoe2}\\
	|h|&\leq &cd|\m||\nabla\m|+\frac{cd|s(x)|}{|\nabla p|}.\label{ccoe3}
	\end{eqnarray}
	In addition, \eqref{coa} implies
	\begin{equation}\label{ccoe4}
	|\nabla p|^2\geq \frac{1}{1+|\m|^2}\ \ \mbox{on $\{v\geq 1\}$}.
	\end{equation}
	Hence,
	\begin{eqnarray}
	|\mathbf{K}|&\leq& cd+c(1+|\m|^2)^{\frac{3}{2}}|s(x)| \ \ \mbox{on $\{v\geq 1\}$},\label{ccoe5}\\
	|h|&\leq& cd|\m||\nabla\m|+cd\sqrt{1+|\m|^2}|s(x)|\nonumber\\
	&\leq & cd^2+c(1+|\m|^2)|s^2(x)\ \ \mbox{on $\{v\geq 1\}$} .\label{ccoe6}
	\end{eqnarray}
	
	Now fix a point $x_0\in \Omega$. Then pick a number $R$ from $(0,\mbox{dist}(x_0,\partial\Omega))$. Define a sequence of concentric balls $B_{R_n}(x_0)$ in $\Omega$ as follows:
	\begin{equation*}
	B_{R_n}(x_0)=\{x:|x-x_0|<R_n\},
	\end{equation*}
	where
	\begin{equation*}
	R_n= \frac{R}{2}+\frac{R}{2^{n+1}},\ n=0,1,2,\cdots.
	\end{equation*} Choose a sequence of smooth functions $\theta_n$ so that
	\begin{eqnarray*}
		\theta_n(x)&=& 1 \ \ \mbox{in $B_{R_n}(x_0)$},\\
		\theta_n(x)&=&0\ \ \mbox{outside $B_{R_{n-1}}(x_0)$},\\
		|\nabla \theta_n(x)|&\leq & \frac{c2^n}{R}\ \ \mbox{for each $x\in \mathbb{R}^2$,}\ \ \ \mbox{and}\\
		0&\leq &\theta_n(x)\leq 1\ \ \mbox{in $\mathbb{R}^2$.}
	\end{eqnarray*}
	Select
	\begin{equation}\label{kbt}
	K\geq2
	\end{equation}
	as below.
	Set
	\begin{equation*}
	K_n=K-\frac{K}{2^{n+1}},\ \ \ n=0,1,2,\cdots.
	\end{equation*}
	Hence,
	\begin{equation*}
	K_n\geq 1\ \ \mbox{ for each $n$}.
	\end{equation*} 
	We use $\theta_{n+1}^2( v -K_{n+1})^+$  as a test function in \eqref{wine11} to obtain
	\begin{eqnarray}
	\lefteqn{\io\frac{1}{ v } A\nabla  v \cdot\nabla ( v -K_{n+1})^+\theta_{n+1}^2dx}\nonumber\\
	&=&-2\io\frac{1}{ v }  A\nabla  v \cdot\nabla\theta_{n+1}( v -K_{n+1})^+\theta_{n+1}dx\nonumber\\
	&&-\io\frac{1}{ v }\mathbf{H}\nabla v \theta_{n+1}^2( v -K_{n+1})^+dx
	-\io h\theta_{n+1}^2( v -K_{n+1})^+dx\nonumber\\
	&&+\io \mathbf{K}\cdot\nabla ( v -K_{n+1})^+\theta_{n+1}^2dx\nonumber\\
	&&+2\io\mathbf{K} \cdot\nabla\theta_{n+1}( v -K_{n+1})^+\theta_{n+1}dx.\label{wine6}
	\end{eqnarray}
	Note that
	\begin{equation*}
	\nabla v =\nabla ( v -K_{n+1})^+\ \ \mbox{on $S_{n+1}(t)$},
	\end{equation*}
	where
	\begin{equation*}
	S_{n+1}(t)=\{x\in B_n(x_0):  v (x, t)\geq K_{n+1}\}.
	\end{equation*}
	This together with \eqref{ellip} and \eqref{wine6} implies
	\begin{eqnarray}
	\lefteqn{\io\frac{1}{ v } |\nabla ( v -K_{n+1})^+|^2\theta_{n+1}^2dx}\nonumber\\
	&\leq&\frac{c4^n}{R^2}\int_{S_{n+1}(t)}\frac{1}{ v } | A|\left[( v -K_{n+1})^+\right]^2dx\nonumber\\
	&&+\io\frac{c}{ v }|\mathbf{H}|^2\theta_{n+1}^2\left[( v -K_{n+1})^+\right]^2dx
	+ \io| h|\theta_{n+1}^2( v -K_{n+1})^+dx\nonumber\\
	&&+\int_{S_{n+1}(t)} c v |\mathbf{K}|^2\theta_{n+1}^2dx+\frac{c2^n}{R}\io|\mathbf{K}| ( v -K_{n+1})^+\theta_{n+1}dx.\label{wine12}
	\end{eqnarray}
	We proceed to analyze each term on the right-hand side of the above inequality.
	Note that
	\begin{equation*}
	|A|\leq 1+|\m|^2.
	\end{equation*}
	The last term in \eqref{wine12} can be estimated as follows:
	\begin{eqnarray*}
		\frac{2^n}{R}\io|\mathbf{K}| ( v -K_{n+1})^+\theta_{n+1}dx&\leq& \frac{c4^n}{R^2}\int_{S_{n+1}(t)}\frac{1}{ v } \left[( v -K_{n+1})^+\right]^2dx\nonumber\\
		&&+\int_{S_{n+1}(t)} c v |\mathbf{K}|^2\theta_{n+1}^2dx.
	\end{eqnarray*}
	Thus this integral can be dropped. 
	Observe that
	\begin{eqnarray*}
		\frac{1}{ v } |\nabla ( v -K_{n+1})^+|^2&=&4|\nabla(\sqrt{ v }-\sqrt{K_{n+1}})^+|^2,\\
		\frac{1}{ v } \left[( v -K_{n+1})^+\right]^2&=&\frac{1}{ v }\left[\left(\sqrt{ v }-\sqrt{K_{n+1}}\right)^+\right]^2\left(\sqrt{ v }+\sqrt{K_{n+1}}\right)^2\nonumber\\
		&=&\left[\left(\sqrt{ v }-\sqrt{K_{n+1}}\right)^+\right]^2\left(1+\frac{\sqrt{K_{n+1}}}{\sqrt{ v }}\right)^2\nonumber\\
		&\leq &4\left[\left(\sqrt{ v }-\sqrt{K_{n+1}}\right)^+\right]^2.
	\end{eqnarray*}
	Notice that
	\begin{eqnarray*}
		\frac{\sqrt{K_{n+1}}-\sqrt{K_{n}}}{\sqrt{K_{n+1}}}&=&\frac{\sqrt{1-\frac{1}{2^{n+2}}}-\sqrt{1-\frac{1}{2^{n+1}}}}{\sqrt{1-\frac{1}{2^{n+2}}}}\nonumber\\
		&=&\frac{1}{2^{n+2}\left(\sqrt{1-\frac{1}{2^{n+2}}}+\sqrt{1-\frac{1}{2^{n+1}}}\right)\sqrt{1-\frac{1}{2^{n+2}}}}\nonumber\\
		&\geq &\frac{1}{2^{n+3}}.
	\end{eqnarray*}
	With this in mind, we estimate
	\begin{eqnarray*}
		\left[\left(\sqrt{ v }-\sqrt{K_{n}}\right)^+\right]^2&\geq &\left[\left(\sqrt{ v }-\sqrt{K_{n}}\right)^+\right]^2\chi_{S_{n+1}(t)}\nonumber\\
		&=&\frac{1}{2}\left(\sqrt{ v }-\sqrt{K_{n}}\right)^+(\sqrt{ v }+\sqrt{ v })\left(1-\frac{\sqrt{K_{n}}}{\sqrt{ v }}\right)\chi_{S_{n+1}(t)}\nonumber\\
		&\geq &\frac{1}{2}\left(\sqrt{ v }-\sqrt{K_{n}}\right)^+(\sqrt{ v }+\sqrt{K_{n+1}})\left(1-\frac{\sqrt{K_{n}}}{\sqrt{K_{n+1}}}\right)\chi_{S_{n+1}(t)}\nonumber\\
		&\geq &\frac{1}{2^{n+4}}( v -K_{n+1})^+.
	\end{eqnarray*}
	Here $\chi_{S_{n+1}(t)}$ is the indicator function of the set $S_{n+1}(t)$.
	Similarly,
	\begin{equation*}
	\left[\left(\sqrt{ v }-\sqrt{K_{n}}\right)^+\right]^2\geq  v \left[\left(1-\frac{\sqrt{K_{n}}}{\sqrt{ v }}\right)^+\right]^2\chi_{S_{n+1}(t)}\geq \frac{1}{2^{2(n+3)}} v \chi_{S_{n+1}(t)}.
	\end{equation*}
	Plugging the preceding results into \eqref{wine12}, we obtain
	\begin{eqnarray}
	\lefteqn{\io|\nabla(\sqrt{ v }-\sqrt{K_{n+1}})^+|^2\theta_{n+1}^2dx}\nonumber\\
	&\leq&\frac{c4^n}{R^2}\int_{S_{n+1}(t)} (1+|\m|^2)\left[\left(\sqrt{ v }-\sqrt{K_{n+1}}\right)^+\right]^2dx\nonumber\\
	&&+c\io|\mathbf{H}|^2\left[\left(\sqrt{ v }-\sqrt{K_{n+1}}\right)^+\right]^2\theta_{n+1}^2dx
	+c2^n\io| h|\theta_{n+1}^2\left[\left(\sqrt{ v }-\sqrt{K_{n}}\right)^+\right]^2dx\nonumber\\
	&&+c2^{2n}\int_{S_{n+1}(t)} |\mathbf{K}|^2\left[\left(\sqrt{ v }-\sqrt{K_{n}}\right)^+\right]^2\theta_{n+1}^2dx.\label{wine14}
	\end{eqnarray}
	We pick a number $r$ from the interval $ (1, \infty)$. 
	Define
	\begin{equation*}
	y_n=\left(\int_{B_{R_n}(x_0)}\left[\left(\sqrt{ v }-\sqrt{K_{n}}\right)^+\right]^{2r}dx\right)^{\frac{1}{r}}.
	\end{equation*}
	We conclude from \eqref{wine14} that
	\begin{eqnarray}
	\lefteqn{\io|\nabla(\sqrt{ v }-\sqrt{K_{n+1}})^+|^2\theta_{n+1}^2dx}\nonumber\\
	&\leq&\frac{c4^ny_n}{R^2} \| (1+|\m|^2)\|_{\frac{r}{r-1},S_{1}(t)}+ cy_n\| \mathbf{H}\|^2_{\frac{2r}{r-1},S_{1}(t)}
	+c2^ny_n\|h\|_{\frac{r}{r-1},S_{1}(t)}\nonumber\\
	&&+c2^{2n}y_n\| \mathbf{K}\|^2_{\frac{2r}{r-1},S_{1}(t)}\nonumber\\
	&\leq&\frac{c4^ny_n}{R^2}\Gamma ,\label{wine15}
	\end{eqnarray}
	where
	\begin{equation*}
	\Gamma=\| (1+|\m|^2)\|_{\frac{r}{r-1},S_{1}(t)}+R^2\left(\| \mathbf{H}\|^2_{\frac{2r}{r-1},S_{1}(t)}+\|h\|_{\frac{r}{r-1},S_{1}(t)}+\| \mathbf{K}\|^2_{\frac{2r}{r-1},S_{1}(t)}\right).
	\end{equation*}
	By Poincar\'{e}'s inequality, we have
	\begin{eqnarray}
	y_{n+1}&\leq &\left(\io\left((\sqrt{ v }-\sqrt{K_{n+1}})^+\theta_{n+1}\right)^{2r}dx \right)^{\frac{1}{r}}\nonumber\\
	&\leq &c\left(\io\left|\nabla\left((\sqrt{ v }-\sqrt{K_{n+1}})^+\theta_{n+1}\right)\right|^{\frac{2r}{r+1}}dx \right)^{\frac{r+1}{r}}\nonumber\\
	&\leq &c\io\left|\nabla\left((\sqrt{ v }-\sqrt{K_{n+1}})^+\theta_{n+1}\right)\right|^{2}dx |S_{n+1}(t)|^{\frac{1}{r}}\nonumber\\
	&\leq &c\io\left|\nabla\left((\sqrt{ v }-\sqrt{K_{n+1}})^+\right|^{2}\theta_{n+1}^2\right)dx |S_{n+1}(t)|^{\frac{1}{r}}\nonumber\\
	&&+\frac{c4^{n}}{R^2}\int_{B_{R_n}(x_0)}\left[\left(\sqrt{ v }-\sqrt{K_{n+1}}\right)^+\right]^{2}dx|S_{n+1}(t)|^{\frac{1}{r}}\nonumber\\
	&\leq &\frac{c4^n}{R^2}\Gamma y_n |S_{n+1}(t)|^{\frac{1}{r}}+\frac{c4^{n}}{R^2} y_nR^{\frac{2(r-1)}{r}}|S_{n+1}(t)|^{\frac{1}{r}}\nonumber\\
	&=&\frac{c4^n}{R^2}\left(\Gamma+R^{\frac{2(r-1)}{r}}\right)y_n |S_{n+1}(t)|^{\frac{1}{r}}.\label{wine16}
	\end{eqnarray}
	We easily see that
	\begin{equation*}
	y_n\geq \left(\int_{S_{n+1}(t)}(\sqrt{K_{n+1}}-\sqrt{K_{n}})^{2r}dx\right)^{\frac{1}{r}}\geq\frac{K}{2^{2(n+3)}}|S_{n+1}(t)|^{\frac{1}{r}}.
	\end{equation*}
	Substituting this into \eqref{wine16} yields
	\begin{equation*}
	y_{n+1}\leq\frac{c4^n}{R^2K} \left(\Gamma+R^{\frac{2(r-1)}{r}}\right)y_n^2.
	\end{equation*}
	In view of Lemma \ref{ynb}, if we choose $K$ so large that
	\begin{equation*}
	y_0\leq \frac{cR^2K}{\Gamma+R^{\frac{2(r-1)}{r}}},
	\end{equation*} then we have
	\begin{equation*}
	\lim_{n\rightarrow 0}y_n=0.
	\end{equation*}
	Taking into account \eqref{kbt}, it is enough for us to let
	\begin{equation*}
	K= \frac{c}{R^2}y_0\left(\Gamma+R^{\frac{2(r-1)}{r}}\right)+2.
	\end{equation*}
	We arrive at
	\begin{equation}\label{wine19}
	\sup_{B_{ \frac{R}{2}}(x_0)} v \leq K= \frac{c}{R^2}y_0\left(\Gamma+R^{\frac{2(r-1)}{r}}\right)+2.
	\end{equation}
	Now we proceed to estimate $\Gamma$.
	Combing \eqref{mint} with \eqref{ddef} and \eqref{ccoe1} yields that
	\begin{equation*}
	\|\mathbf{H}\|^2_{\frac{2r}{r-1}, S_1(t)}\leq c\|\nabla\m\|^2_{\infty, B_{R}(x_0)}.
	\end{equation*}
	Similarly, by (H1), \eqref{ccoe5}, and \eqref{ccoe6}, we have
	\begin{eqnarray*}
		\|\mathbf{K}\|^2_{\frac{2r}{r-1}, S_1(t)}&\leq c&\|\nabla\m\|^2_{\infty, B_{R}(x_0)}+c\|s^2(x)\|_{\infty, B_{R}(x_0)},\nonumber\\
		\|h\|_{\frac{r}{r-1}, S_1(t)}&\leq c&\|\nabla\m\|^2_{\infty, B_{R}(x_0)}+c\|s^2(x)\|_{\infty, B_{R}(x_0)}.
	\end{eqnarray*}
	Furthermore,
	\begin{equation*}
	y_0=\left(\int_{B_{ R}(x_0)}\left[\left(\sqrt{ v }-\sqrt{\frac{K}{2}}\right)^+\right]^{2r}dx\right)^{\frac{1}{r}}
	\leq \|v\|_{ r, B_{ R}(x_0)}.
	\end{equation*}
	Collecting the preceding estimates in \eqref{wine19}, we arrive at
	\begin{equation}\label{wine20}
	\sup_{B_{\frac{R}{2}}(x_0)} v\leq  c\|v\|_{ r, B_{ R}(x_0)}\left(\|\nabla\m\|^2_{\infty, B_{R}(x_0)}+1+\frac{1}{R^2}\right)+c.
	\end{equation}
	By an argument in (\cite{GT}, p. 303), we can extend the above estimate to the whole $\Omega$. That is, we have
	\begin{equation}\label{wine21}
	\sup_{\Omega} v\leq  c\|v\|_{r, \Omega}\left(\|\nabla\m\|^2_{\infty, \Omega}+1\right)+c.
	\end{equation}
	A more detailed explanation for this can be found in \cite{X7}.
	Remember from \eqref{f5} that
	\begin{equation*}
	\int_{\Omega}vdx=\int_{\Omega}\left(|\nabla p|^2+(\mnp)^2\right)dx\leq c.
	\end{equation*}
	On account of \eqref{inter}, we have
	\begin{eqnarray*}
		\|v\|_{r,\Omega }
		&\leq &\varepsilon\|v\|_{\infty,\Omega }+\frac{1}{\varepsilon^{r-1}}\|v\|_{1,\Omega }\nonumber\\
		&\leq &\varepsilon\|v\|_{\infty,\Omega }+\frac{c}{\varepsilon^{r-1}},\ \ \varepsilon>0.
	\end{eqnarray*}
	Substituting this into \eqref{wine21} and choosing $\varepsilon$ suitably small in the reulting inequality, we obtain \eqref{npb}.
\end{proof}

We are ready to prove the main theorem.
\begin{proof}[Proof of the Main Theorem]
	Define
	\begin{equation*}
	f_i(x,t)=\left\{\begin{array}{ll}
	\beta^2\m\cdot\nabla p p_{x_i}-|\m|^{2(\gamma-1)}m_i&\mbox{if $(x,t)\in\ot$},\\
	0&\mbox{if $(x,t)$ lies outside $\ot$}.
	\end{array}\right.
	\end{equation*}
	Consider the function
	\begin{equation*}
	u_i=\frac{1}{ 4\pi\alpha^2}\int_{0}^{t}\frac{1}{t-\tau}\int_{\mathbb{R}^2}\exp\left(-\frac{|x-y|^2}{4\alpha^2(t-\tau)}\right)f_i(y,\tau)dyd\tau.
	\end{equation*}
	We see from (\cite{LSU}, Chapter IV) that $u_i$ satisfies
	\begin{eqnarray*}
		(u_i)_t-\alpha^2\Delta u_i&=& f_i\ \ \ \mbox{in $\mathbb{R}^2\times(0,\infty)$,}\\
		u_i(x,0)&=&0\ \ \mbox{on $\mathbb{R}^2$}.
	\end{eqnarray*}
	Furthermore, for each $s> 1$ there is a positive number $c$ such that
	\begin{equation}\label{lsu1}
	\|(u_i)_t\|_{s,\ot}+ \|u_i\|_{L^s(0,T;W^{2,s}(\Omega))}\leq c\|f_i\|_{s,\ot}.
	\end{equation}
	We infer from  \eqref{mint} that for each $\delta_0\in (1,2)$ and $s\geq 1$ there is a positive number such that
	\begin{equation*}
	\sup_{0\leq t\leq T}\io\frac{|\m|^s}{|x-y|^{\delta_0}}dy\leq c.
	\end{equation*}
	Set
	\begin{equation*}
	l=\frac{|x-y|}{2\alpha\sqrt{t-\tau}}.
	\end{equation*}
	For each $\delta\in (2,3)$ we estimate
	\begin{eqnarray}
	|\nabla u_i|&=&\left|\frac{1}{ 16\pi\alpha^4}\int_{0}^{t}\frac{1}{(t-\tau)^2}\int_{\mathbb{R}^2}(x-y)\exp\left(-l^2\right)f_i(y,\tau)\frac{}{}dyd\tau\right|\nonumber\\
	&\leq &c\int_{0}^{t}\frac{1}{(t-\tau)^2}\int_{\mathbb{R}^2}\frac{\left(2\alpha\sqrt{t-\tau}\right)^\delta}{|x-y|^{\delta-1}}l^\delta\exp\left(-l^2\right)|f_i(y,\tau)|dyd\tau\nonumber\\
	&\leq &c\|\nabla p\|^2_{\infty, \ot}\int_{0}^{t}\frac{1}{(t-\tau)^{2-\frac{\delta}{2}}}\int_{\mathbb{R}^2}|\m|\chi_{\ot}\frac{1}{|x-y|^{\delta-1}}dyd\tau\nonumber\\
	&&+\int_{0}^{t}\frac{1}{(t-\tau)^{2-\frac{\delta}{2}}}\int_{\mathbb{R}^2}|\m|^{2\gamma-1}\chi_{\ot}\frac{1}{|x-y|^{\delta-1}}dyd\tau\nonumber\\
	&\leq &\left(c\|\nabla p\|^2_{\infty, \ot}+c\right)t^{-1+\frac{\delta}{2}}.\label{nmb2}
	\end{eqnarray}
	Obviously, $w_i\equiv m_i-u_i$ satisfies the problem
	\begin{eqnarray}
	(w_i)_t-\alpha^2\Delta w_i&=&0\ \ \mbox{in $\ot$},\\
	w_i&=&-u_i\ \ \mbox{on $\Sigma_T$,}\\
	w_i&=&m_{0i}.
	\end{eqnarray}
	We can easily conclude from \eqref{lsu1} and the classical regularity theory for the heat equation (\cite{LSU}, Chapter IV) that
	$\|\nabla w_i\|_{\infty, \ot}\leq c\|\nabla u_i\|_{\infty, \ot}+c\|\nabla m_{0i}\|_{\infty, \ot}$.
	This together with \eqref{nmb2} gives
	\begin{equation}\label{nmb1}
	\|\nabla \m\|_{\infty, \ot}\leq cT^{-1+\frac{\delta}{2}}\left(\|\nabla p\|^2_{\infty, \ot}+1\right)+c.
	\end{equation}
	We deduce from \eqref{npb} that
	\begin{eqnarray}\label{nmb}
	\|\nabla p\|^2_{\infty,\ot }&\leq& \|v\|_{\infty,\ot }\leq c\|\nabla\m\|^{2r}_{\infty,\ot}+c\nonumber\\
	&\leq &cT^{r(\delta-2)}\left(\|\nabla p\|^{4r}_{\infty, \ot}+1\right)+c.
	\end{eqnarray}
	Set \begin{equation*}
	\ep=cT^{r(\delta-2)}.
	\end{equation*}
	Consider the function $g(\tau)=\ep \tau^{2r}-\tau+\ep+c$ on $[0, \infty)$. Then \eqref{nmb} asserts 
	\begin{equation}\label{key}
	g\left(\|\nabla p\|^2_{\infty,\Omega\times[0,s] }\right)\geq 0\ \ \mbox{for each $s\in [0,T]$}.
	\end{equation}The function $g$ achieves its minimum value at $\tau_0=\frac{1}{\left(2\ep r\right)^{\frac{1}{2r-1}}}$. The minimum value
	\begin{eqnarray*}
		g(\tau_0)&=&\frac{\ep}{\left(2\ep r\right)^{\frac{2r}{2r-1}}}-\frac{1}{\left(2\ep r\right)^{\frac{1}{2r-1}}}+\ep+c\nonumber\\
		&=&\ep+c-\frac{\ep(2r-1)}{\left(2\ep r\right)^{\frac{2r}{2r-1}}}\leq -\ep,
	\end{eqnarray*}
	provided that
	\begin{equation}\label{cont1}
	(c+2\ep)\ep^{\frac{1}{2r-1}} \leq\frac{2r-1}{(2r)^{\frac{2r}{2r-1}}}.
	\end{equation}  In addition to this, we require that
	\begin{equation}\label{cont2}
	\|\nabla p(\cdot,0)\|^2_{\infty,\Omega}\leq \tau_0.
	\end{equation}  
	Recall that our solution $(\m, p)$ here is the local solution constructed in \cite{X6}. It is not difficult to see that $\|\nabla p(\cdot,t)\|_{\infty,\Omega}$ is a continuous function of $t$. We can conclude from \eqref{key} that $\|\nabla p\|^2_{\infty,\ot}\leq \tau_0$ whenever both \eqref{cont1} and \eqref{cont2} hold. Condition \eqref{cont1} can be achieved easily by taking $T$ suitably small. As for \eqref{cont2}, recall that $p(x,0)=p_0$ satisfies the boundary value problem \eqref{po1}-\eqref{po2}. Our assumptions on $\m_0, s(x)$ are strong enough to guarantee that $|\nabla p_0|$ is bounded. We can also obtain \eqref{cont2} for suitably small $T$. In summary, if  $T_0$ is the largest $T$ such that both \eqref{cont1} and \eqref{cont2} hold, then
	\begin{equation}\label{rr1}
	\|\nabla p\|^2_{\infty,\Omega\times[0,T_0]}\leq \tau_0.
	\end{equation}
	We consider $(\m(x,t+T_0), p(x,t+T_0 ))$ on $\Omega\times[0,T_0]$. Conditions \eqref{cont1} and \eqref{cont2} still hold, and so does \eqref{rr1}. Therefore, 
	we can extend the solution in the time direction as far away as we want.
\end{proof}



\begin{thebibliography}{99}
	\bibitem{AAFM} G. Albi, M. Artina, M. Fornasier, and P. Markowich, Biological transportation networks: Modeling and simulation, \newblock \emph{Anal. Appl. (Singap.)}, {\bf 14}(2016), 185-206.  
	\bibitem{ABHMS} G. Albi, M. Burger, J. Haskovee, P. Markowich, and M. Schlottbom, {\it Continuum modeling of biological network formulation}, In: N. Bellomo, P. Degond, and E. Tamdor (Eds.), 
	Active Particles Vol.I - Theory, Models, Applications, Series: Modelling and Simulation in Science and Technology, Birkhauser-Springer (Boston), 2017. 
	\bibitem{A} G. Alessandrini, Critical points of solutions of elliptic equations in two variables, \newblock \emph{Ann. Scuola Norm. Sup. Pisa Cl. Sci.}, {\bf 14}(1987), 229-256.
	\bibitem{B} S. Bernstein, Sur la g\'{e}n\'{e}ralization du probl\'{e}me de Dirichlet (1), \emph{Math. Ann.}, {\bf 62}(1906), 253-271.
	\bibitem{D} E. DiBenedetto, {\it Degenerate Parabolic Equations}, Springer-Verlag, New York, 1993.
	%
	\bibitem{GT} D. Gilbarg and N. S. Trudinger, {\it
		Elliptic Partial Differential Equations of Second Order},
	Springer-Verlag, Berlin, 1983.
	\bibitem{HMP} J. Haskovec, P. Markowich, and B. Perthame,  Mathematical analysis of a PDE system for biological network formulation, 
	\newblock \emph{Comm. Partial Differential Equations}, {\bf 40} (2015), 918-956.
	\bibitem{HMPS} J. Haskovec, P. Markowich, B. Perthame, and M. Schlottbom,  Notes on a PDE system for biological network formulation, 
	\newblock \emph{Nonlinear Anal.}, {\bf 138} (2016), 127-155.
	\bibitem{H} D. Hu, {\it Optimization, adaptation, and initialization of biological transport networks}, Workshop on multi scale problems from physics, biology, and material sciences, May 28-31, 2014, Shanghai.
	\bibitem{HC} D. Hu and D. Cai,  Adaptation and optimization of biological transport networks, 
	\newblock \emph{Phys. Rev. Lett.}, {\bf 111}(2013), 138701.
	\bibitem{LSU}	Q.A. Ladyzenskaja, V.A. Solonnikov, and N.N. Ural'ceva, {\it Linear and Quasi-linear Equations of Parabolic Type}, Tran. Math. Monographs, Vol. 23, AMS, Providence, RI, 1968.
	\bibitem{L} B. Li,
	On the blown-up criterion and global existence of a nonlinear PDE system in biological transportation networks , {\it Kinet. Relat. Models}, {\bf 12}(2019), 1131-1162.
	\bibitem{LX} J.-G.~Liu and X.~Xu,
	Partial regularity of weak solutions to a PDE system with cubic nonlinearity, {\it J. Differential Equations}, {\bf 264}(2018), 5489-5526.
	\bibitem{M}
	\newblock N. G. Meyers,
	\newblock An $L^p$ estimate for the gradient of solution of second order elliptic divergence equations,
	\newblock  \emph{Ann. Scuola Norm. Pisa III}, 
	\newblock  \textbf{17}(1963), 189-206.
	\bibitem{PS} L.A. Peletier and J. Serrin, Gradient bounds and Liouville theorems for quasilinear elliptic equations, \newblock \emph{Ann. Scuola Norm. Sup. Pisa Cl. Sci.}, {\bf 5}(1978), 65-104.
	\bibitem{R}
	\newblock J. R. Rodrigues,
	\newblock \emph{Obstacle Problems in Mathematical Physics}, North-Holland Math. Studies, Vol.\textbf{134},
	North-Holland, Amsterdam, 1987.
	\bibitem{SL} J. Shen and B. Li, A Priori estimates for a nonlinear system with some essential symmetrical structures, \newblock \emph{Symmetry},  {\bf 11}(2019), Article \# 852.
	\bibitem{S} R.P. Sperb, {\it Maximum Principle and their Applications},
	Academic Press, New York, 1981.
	\bibitem{X2} X. Xu,  Partial regularity of solutions to a class of degenerate systems, {\it Trans. Amer. Math. Soc.}, {\bf 349}(1997), 1973-1992.
	\bibitem{X5} 
	\newblock  X. Xu, 
	\newblock Regularity theorems for a biological network formulation model in two space dimensions, 
	\newblock  \emph{ Kinet. Relat. Models}, 
	\newblock {\bf 11}(2018), 397-408.
	\bibitem{X6} 
	\newblock  X. Xu, 
	\newblock Partial regularity of weak solutions and life-span of smooth solutions to a biological network formulation model, 
	arXiv:1706.06057, V5, 2018.
	\bibitem{X7} 
	\newblock  X. Xu,
	Global existence of strong solutions to a groundwater flow problem, {\it Z. angew. Math. Phys.}, to appear. arXiv:1912.03793 [math.AP], 2019.
	\bibitem{Y} 
	\newblock  G. Yuan, 
	\newblock Regularity of solutions of the thermistor problem, 
	\newblock  \emph{ Appl. Anal.}, 
	\newblock {\bf 53}(1994), 149-156.
\end{thebibliography}
\end{document}